 \newtheorem{theorem}{Theorem}[section]
 \newtheorem{lemma}[theorem]{Lemma}
\title{ A Support Based Algorithm for Optimization with Eigenvalue Constraints }
\author{
Emre~Mengi\thanks{ Department of Mathematics, Ko\c{c} University,
Rumelifeneri Yolu, 34450 Sar{\i}yer-\.{I}stanbul, Turkey {\tt (emengi@ku.edu.tr)}. 
The work of the author was supported in part by the European Commision grant
PIRG-GA-268355.
 }
}
\begin{document}
\maketitle

\begin{abstract}
\noindent
Optimization of convex functions subject to eigenvalue constraints is intriguing because of 
peculiar analytical properties of eigenvalues, and is of practical interest because of wide
range of applications in fields such as structural design and control theory. Here we focus
on the optimization of a linear objective subject to a constraint on the smallest eigenvalue of
an analytical and Hermitian matrix-valued function. We offer a quadratic support function based
numerical solution. The quadratic support functions are derived utilizing the variational properties
of an eigenvalue over a set of Hermitian matrices. Then we establish the local convergence
of the algorithm under mild assumptions, and deduce a precise rate of convergence result
by viewing the algorithm as a fixed point iteration. We illustrate its applicability in practice on the 
pseudospectral functions.
 \\

\noindent
\textbf{Key words.} Non-smooth optimization, analytical properties of eigenvalues, support functions,
Karush-Kuhn-Tucker conditions, fixed-point theory, pseudospectra
  \\

\noindent
\textbf{AMS subject classifications.} 65F15, 90C26, 49J52

\end{abstract}

\pagestyle{myheadings}
\thispagestyle{plain}
\markboth{ E. MENGI }{ OPTIMIZATION WITH EIGENVALUE CONSTRAINTS }

\section{Introduction}

Consider an analytic and Hermitian matrix-valued function ${\mathcal A}(\omega) : {\mathbb R}^d \rightarrow {\mathbb C}^{n\times n}$.
This work concerns optimization problems of the form
\begin{equation}\label{eq:problem}
	{\rm maximize} \;\;\; c^T \omega	\;\;\;\;\;\;\;\;	{\rm subject} \;\; {\rm to} \;\;\;\; \lambda_{\min} ({\mathcal A}(\omega)) \leq 0
\end{equation}
where $c \in {\mathbb R}^d$ is given, $\lambda_{\min}(\cdot)$ denotes the smallest eigenvalue, and where we assume
that the feasible set $\{ \omega \in {\mathbb R}^d \; | \; \lambda_{\min} ({\mathcal A}(\omega)) \leq 0 \}$ is bounded
in order to ensure the well-posedness of the problem. In a recent work \cite{Mengi2013}
we pursued support-function based ideas to optimize a prescribed eigenvalue of ${\mathcal A}(\omega)$ globally on a box
${\mathcal B} \subset {\mathbb R}^d$. Remarkably, such ideas yield a linearly convergent algorithm that overcomes non-convexity 
and non-smoothness intrinsic to involved eigenvalue functions. Here we explore the use of support functions for the numerical
solution of (\ref{eq:problem}).

The problem that we tackle is non-convex due to the eigenvalue constraint. We convexify the problem by replacing
the eigenvalue constraint with a convex quadratic constraint. Specifically, let $\omega_k$ be a feasible point satisfying
$\lambda_{\min}({\mathcal A}(\omega_k)) \leq 0$, we benefit from a quadratic support function about $\omega_k$ such
that 
\[
	q_k(\omega_k) =  \lambda_{\min}({\mathcal A}(\omega_k)) \;\;\; {\rm and}  \;\;\; q_k(\omega) \geq \lambda_{\min}({\mathcal A}(\omega))
\]
for all $\omega \in {\mathbb R}^d$. The original non-convex problem (\ref{eq:problem}) is replaced by
\begin{equation}\label{eq:convex_problem}
	{\rm maximize} \;\;\; c^T \omega	\;\;\;\;\;\;\;\;	{\rm subject} \;\; {\rm to} \;\;\;\; q_k(\omega) \leq 0.
\end{equation}
The convex problem above can be solved analytically, and has a maximizer $\omega_\ast$ that is suboptimal yet
feasible with respect to the original problem. We build a new quadratic support function $q_{k+1}(\omega)$ about the maximizer 
$\omega_{k+1} = \omega_\ast$, replace the constraint in (\ref{eq:convex_problem}) with $q_{k+1}(\omega) \leq 0$, and
solve the updated convex optimization problem. The practicality of the algorithm relies on a $\gamma$ satisfying
\begin{equation}\label{eq:sd_bound}
		\lambda_{\max} \left[ \nabla^2 \lambda_{\min} ({\mathcal A}(\omega))   \right]		\leq 		\gamma	\;\;	\forall \omega \in {\mathbb R}^d
		\;\; {\rm such}\; {\rm that} \;\;  \lambda_{\min} ({\mathcal A}(\omega)) \;\; {\rm is} \; {\rm simple}.
\end{equation}
Above and throughout the text $\lambda_{\max}(\cdot)$ refers to the largest eigenvalue.
As we shall see in Section \ref{sec:algorithm}, the quadratic support functions are built on the existence of such a $\gamma$. 
It seems feasible to deduce such $\gamma$ analytically; this is discussed in Section \ref{sec:sec_der}.

Optimization based on support functions dates back to Kelley's cutting plane method \cite{Kelley1960}, where linear 
objectives subject to convex constraints are solved by exploiting support functions. Kelley's idea is really
suitable for convex constraints, and extensions to non-convex constraints to locate a locally optimal solution
do not appear straightforward. More recently, inspired from Kelley's cutting plane method, bundle methods became popular 
for non-smooth optimization built around linear support functions defined in terms of subgradients \cite{Kiwiel1985, Makela2002}. 
Bundle methods are especially effective for the unconstrained optimization of a convex non-smooth function, yet
they are not as effective in the non-convex setting even to locate a locally optimal solution.

There are various applications that fit into the setting (\ref{eq:problem}). For instance, the $\epsilon$-pseudospectral
abscissa and radius \cite{Trefethen2005} of a given matrix $A$, which are the rightmost and outermost points in the 
$\epsilon$-pseudospectrum of the matrix (the set comprised of the eigenvalues of all matrices within an 
$\epsilon$-neighborhood with respect to the matrix 2-norm) fit perfectly into the setting; Section \ref{sec:pseudo} 
illustrates this. In structural design, \cite{Achtziger2007} a classical problem is the minimization of the volume 
subject to inequality constraints on the smallest eigenvalue. In robust control theory, it is desirable
to design a system subject to the largest eigenvalue not exceeding a prescribed tolerance.

The paper is organized as follows. In the next section we derive support functions for $\lambda_{\min}({\mathcal A}(\omega))$,
and specify the algorithm based on the solution of the convex problem (\ref{eq:convex_problem}). 
In Section \ref{sec:convergence} we show that the sequence generated by the algorithm converges to 
a locally maximal solution of the non-convex problem (\ref{eq:problem}) under mild assumptions.
Section \ref{sec:rate_convergence} is devoted to a rate-of-convergence analysis of the algorithm,
by viewing the algorithm as a fixed-point iteration. The practicality of the algorithm relies on the
deduction of an upper bound $\gamma$ satisfying (\ref{eq:sd_bound}) either analytically or numerically.
We present a result in Section \ref{sec:sec_der} that facilitates deducing such $\gamma$ analytically.
Finally, we illustrate the practical usefulness of the algorithm on the pseudospectral functions 
in Section \ref{sec:pseudo}, though the field of applicability of the algorithm is wider than the
pseudospectral functions.


\section{Derivation of the Algorithm}\label{sec:algorithm}

We begin with the derivation of the support functions. The derivation depends on the analytical 
properties of $\lambda_{\min} \left( {\mathcal A}(\omega) \right)$. 
We summarize the relevant classical results below \cite{Rellich1969, Lancaster1964}.

\begin{lemma}\label{lemma:anal_eig}
Let ${\mathcal A}(\omega) : {\mathbb R}^d 	\rightarrow 	{\mathbb C}^{n\times n}$ be Hermitian and analytic, and 
$\Phi : {\mathbb R} \rightarrow {\mathbb C}^{n\times n}$ be defined by $\Phi(\alpha) := {\mathcal A}(\hat{\omega} + \alpha p)$ 
for given $\hat{\omega}, p \in {\mathbb R}^d$. Then the following hold:
\begin{enumerate}
	\item[\bf (i)] There is an ordering $\phi_1(\alpha), \dots, \phi_n(\alpha)$ of the eigenvalues of 
	$\Phi(\alpha)$ so that each eigenvalue $\phi_j(\alpha)$ for $j = 1,\dots,n$ is analytic on ${\mathbb R}$;
	\item[\bf (ii)] Suppose that $\phi(\alpha) := \lambda_{\min}(\Phi(\alpha))$ 
	is simple for all $\alpha$ on an open interval ${\mathcal I}$ in ${\mathbb R}$. Then $\phi(\alpha)$ is analytic
	on ${\mathcal I}$;
	\item[\bf (iii)] The left-hand $\phi'_{-}(\alpha)$ and the right-hand $\phi'_{+}(\alpha)$ derivatives of $\phi(\alpha) := \lambda_{\min}(\Phi(\alpha))$
	exist everywhere. Furthermore $\phi'_{-}(\alpha) \geq \phi'_{+}(\alpha)$ at all $\alpha \in {\mathbb R}$;
	\item[\bf (iv)] The eigenvalue function $\lambda_{\min}({\mathcal A}(\omega))$ is twice continuously differentiable at
	all $\omega\in {\mathbb R}^d$ where it is simple. Furthermore, at all such $\omega$ we have
	\[
		\frac{\partial \lambda_{\min}({\mathcal A}(\omega))}{\partial \omega_j}
					=
			v_n(\omega)^\ast
				\frac{\partial {\mathcal A}(\omega)}{\partial \omega_j}
			v_n(\omega),
	\]
	and
	\begin{eqnarray*}
		\frac{\partial^2 \lambda_{\min}({\mathcal A}(\omega))}{\partial \omega_k \; \partial \omega_\ell}
						= &
		v_n^{\ast}(\omega)	\frac{\partial^2 {\mathcal A}(\omega)}{\partial \omega_k \; \partial \omega_l} v_n(\omega)
							+  \hskip 52ex \\
						&
		2\cdot \Re
			\left[
					\sum_{m=1}^{n-1}	
						\frac{1}{ \lambda_{\min}({\mathcal A}(\omega)) - \lambda_j(\omega) }  
						\left(
							v_n(\omega)^{\ast} \frac{\partial {\mathcal A}(\omega)}{\partial \omega_k} v_m(\omega)
						\right) 
						\left(
							v_m(\omega)^{\ast} \frac{\partial {\mathcal A}(\omega)}{\partial \omega_\ell} v_n(\omega)
						\right)
			\right].
	\end{eqnarray*}
	where $\lambda_j(\omega)$ denotes the $j$th largest eigenvalue of ${\mathcal A}(\omega)$ and $v_j(\omega)$ denotes
	an associated eigenvector that is analytic along every line in ${\mathbb R}^d$ such that $\{ v_1(\omega), \dots, v_n(\omega) \}$ is
	orthonormal.
\end{enumerate}
\end{lemma}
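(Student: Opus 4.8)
The plan is to treat part (i) as the foundational analytic perturbation result and to build the remaining three parts on top of it. Part (i) is Rellich's theorem on analytic families of Hermitian matrices: since $\Phi(\alpha)$ is Hermitian and analytic in the real parameter $\alpha$, its eigenvalues are real for every $\alpha$ and the family admits no genuine (Puiseux-type) branch points, so one can choose a global analytic parametrization $\phi_1(\alpha),\dots,\phi_n(\alpha)$ of the eigenvalues, together with an analytic orthonormal eigenbasis $v_1(\alpha),\dots,v_n(\alpha)$. I would not reprove this but invoke the classical results \cite{Rellich1969, Lancaster1964}. This is also the step I expect to be the genuine obstacle if one insisted on a self-contained argument, because the absence of branching is exactly the delicate point distinguishing the Hermitian case from a general analytic family, and it is the ingredient on which everything else rests.

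Given (i), part (ii) is a connectedness argument. On $\mathcal{I}$ we have $\phi(\alpha)=\min_{1\le j\le n}\phi_j(\alpha)$, a pointwise minimum of finitely many analytic functions. If the branch realizing this minimum changed index across $\mathcal{I}$, then by continuity two distinct branches would have to agree at some intermediate $\alpha_0\in\mathcal{I}$ while both equalling $\phi(\alpha_0)$; that would make $\lambda_{\min}(\Phi(\alpha_0))$ non-simple, contradicting the hypothesis. Hence a single analytic branch $\phi_j$ realizes the minimum throughout the connected set $\mathcal{I}$, and so $\phi=\phi_j$ is analytic there.

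For part (iii) I would compute the one-sided derivatives of the finite minimum directly. Fix $\alpha_0$ and let $J$ be the set of active indices $j$ with $\phi_j(\alpha_0)=\phi(\alpha_0)$; the inactive branches are strictly larger at $\alpha_0$ and, by continuity, remain strictly larger on a neighborhood, so locally $\phi=\min_{j\in J}\phi_j$. A first-order expansion of each active branch then gives $\phi'_{+}(\alpha_0)=\min_{j\in J}\phi'_j(\alpha_0)$ and $\phi'_{-}(\alpha_0)=\max_{j\in J}\phi'_j(\alpha_0)$, the sign reversal in the left-hand difference quotient being what turns the minimum into a maximum. Both quantities are finite, so the one-sided derivatives exist everywhere, and $\phi'_{-}(\alpha_0)=\max_{j\in J}\phi'_j(\alpha_0)\ge\min_{j\in J}\phi'_j(\alpha_0)=\phi'_{+}(\alpha_0)$, which is the asserted inequality.

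Part (iv) is the only part requiring honest computation, and I would organize it around the Riesz spectral projector in order to obtain joint smoothness in $\omega$ rather than mere smoothness along lines. At a point $\omega$ where $\lambda_{\min}(\mathcal{A}(\omega))$ is simple, simplicity persists on a neighborhood, so a fixed contour $\Gamma$ separates $\lambda_{\min}$ from the remaining eigenvalues; the projector $P(\omega)=\frac{1}{2\pi i}\oint_\Gamma (\zeta I-\mathcal{A}(\omega))^{-1}\,d\zeta$ is analytic in $\omega$ because the resolvent is, and hence $\lambda_{\min}(\mathcal{A}(\omega))=\mathrm{tr}(\mathcal{A}(\omega)P(\omega))$ is analytic, in particular twice continuously differentiable. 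For the derivative formulas I would differentiate the eigenrelation $\mathcal{A}(\omega)v_n(\omega)=\lambda_{\min}(\mathcal{A}(\omega))v_n(\omega)$ subject to $v_n^\ast v_n=1$: left-multiplying by $v_n^\ast$ yields the first-order Rayleigh-quotient formula, and differentiating once more while expanding $\partial v_n/\partial\omega_\ell$ in the orthonormal basis $\{v_m\}$ produces the second-order formula, the coefficient against each $v_m$ with $m\ne n$ being exactly $\bigl(v_n^\ast(\partial\mathcal{A}/\partial\omega_\ell)v_m\bigr)/(\lambda_{\min}-\lambda_m)$, which gives the displayed resolvent-type sum. The only care needed here is the bookkeeping with the eigenvector normalization and the Hermitian symmetry that lets the cross terms combine into the factor $2\,\Re[\cdot]$; the denominators $\lambda_{\min}-\lambda_m$ are precisely what force the restriction to simple eigenvalues.
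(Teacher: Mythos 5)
Your proposal is correct, but there is little in the paper to compare it against line by line: the paper states this lemma explicitly as a summary of classical results and offers no proof at all, only the citations \cite{Rellich1969, Lancaster1964}. Where the paper delegates everything to the literature, you delegate only part \textbf{(i)} (Rellich's theorem, which you rightly identify as the one genuinely deep ingredient --- the absence of branching for Hermitian analytic families in one real parameter) and then derive \textbf{(ii)}--\textbf{(iv)} from it. Your arguments are sound: the crossing/connectedness argument for \textbf{(ii)}, the one-sided derivatives of a finite minimum of analytic functions for \textbf{(iii)} (with the correct min/max reversal yielding $\phi'_{-}(\alpha)\geq\phi'_{+}(\alpha)$), and the Riesz-projector argument for \textbf{(iv)}. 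The last of these deserves emphasis: part \textbf{(i)} gives analyticity only along lines, which by itself does not yield joint twice continuous differentiability in $\omega$, so an argument like your representation $P(\omega)=\frac{1}{2\pi i}\oint_\Gamma(\zeta I-{\mathcal A}(\omega))^{-1}\,d\zeta$ with $\lambda_{\min}({\mathcal A}(\omega))=\mathrm{tr}\left({\mathcal A}(\omega)P(\omega)\right)$ is genuinely needed, and it in fact delivers real-analyticity near points of simplicity, stronger than the $C^2$ statement in the lemma. The one piece of bookkeeping to make explicit in a full write-up is the choice of the remaining eigenvectors $v_m(\omega)$, $m\neq n$, in the second-derivative formula: at points where the other eigenvalues are not simple one must fix some orthonormal eigenbasis (for instance the line-analytic one from part \textbf{(i)}, as the paper does) and note that the sum is unchanged under a different orthonormal choice within each eigenspace. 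In short, your route buys a self-contained proof at the cost of length, while the paper's pure citation is the standard economical choice for facts of this vintage.
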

\noindent
In the theorem below and elsewhere $\| \cdot \|$ denotes the 2-norm or Euclidean norm on ${\mathbb R}^d$.
\begin{theorem}[Support Functions]
Suppose ${\mathcal A}(\omega) : {\mathbb R}^d 	\rightarrow 	{\mathbb C}^{n\times n}$ is Hermitian and analytic,
$\gamma \in {\mathbb R}$ satisfies (\ref{eq:sd_bound}), and $\omega_k \in {\mathbb R}^d$ is such that $\lambda_{\min}({\mathcal A}(\omega_k))$ 
is simple. Then
\[
    \lambda_{\min}({\mathcal A}(\omega))  \;\;  \leq  \;\; 
    	q_k(\omega) := \lambda_k + \nabla \lambda_k^T (\omega - \omega_k) + \frac{\gamma}{2} \| \omega - \omega_k \|^2  \;\;\;\; \forall \omega \in {\mathbb R}^d
\]
where $\lambda_k := \lambda_{\min}({\mathcal A}(\omega_k))$ and $\nabla \lambda_k := \nabla \lambda_{\min}({\mathcal A}(\omega_k))$.     
\end{theorem}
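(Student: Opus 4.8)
The plan is to reduce the multivariate inequality to a one-dimensional statement along the segment joining $\omega_k$ to an arbitrary $\omega$, and then to exploit the variational structure of $\lambda_{\min}$ recorded in Lemma~\ref{lemma:anal_eig}. Fix $\omega \in \R^d$, set $p := \omega - \omega_k$, and define $\phi(\alpha) := \lambda_{\min}({\mathcal A}(\omega_k + \alpha p))$, so that $\phi(0) = \lambda_k$ and $\phi(1) = \lambda_{\min}({\mathcal A}(\omega))$. Introduce the quadratic model
\[
	h(\alpha) := \lambda_k + (\nabla \lambda_k^T p)\, \alpha + \frac{\gamma}{2} \| p \|^2 \alpha^2 ,
\]
which satisfies $h(1) = q_k(\omega)$. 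Since $\lambda_{\min}({\mathcal A}(\omega_k))$ is simple, part (iv) of Lemma~\ref{lemma:anal_eig} gives that $\phi$ is differentiable at $0$ with $\phi'(0) = \nabla \lambda_k^T p = h'(0)$, while trivially $\phi(0) = h(0)$. Thus the desired bound $\lambda_{\min}({\mathcal A}(\omega)) \le q_k(\omega)$ amounts to $e(1) \le 0$ for $e := \phi - h$, and as $\omega$ is arbitrary it suffices to show $e(\alpha) \le 0$ for every $\alpha \ge 0$.

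The engine of the proof is a one-sided second-order comparison. By part (i) of Lemma~\ref{lemma:anal_eig}, $\phi$ is the pointwise minimum of the $n$ analytic eigenvalue branches $\phi_1, \dots, \phi_n$; consequently the parameters at which the minimizing branch changes (equivalently, at which $\phi$ fails to be simple) are isolated and hence finite in number on any bounded segment, and on each open subinterval between them $\phi$ agrees with a single analytic branch. On such a subinterval $\lambda_{\min}$ is simple, so by part (iv) of Lemma~\ref{lemma:anal_eig} together with the Rayleigh bound $p^T M p \le \lambda_{\max}(M)\|p\|^2$ and the hypothesis (\ref{eq:sd_bound}),
\[
	\phi''(\alpha) = p^T \nabla^2 \lambda_{\min}({\mathcal A}(\omega_k + \alpha p))\, p \;\le\; \lambda_{\max}\!\left[ \nabla^2 \lambda_{\min}({\mathcal A}(\omega_k + \alpha p)) \right] \| p \|^2 \;\le\; \gamma \| p \|^2 = h''(\alpha),
\]
so $e'' \le 0$ and $e$ is concave on each smooth piece. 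At each breakpoint part (iii) of Lemma~\ref{lemma:anal_eig} furnishes $\phi'_{-}(\alpha) \ge \phi'_{+}(\alpha)$; as $h$ is smooth this yields $e'_{-}(\alpha) \ge e'_{+}(\alpha)$, i.e. the slope of $e$ can only jump downward. Combining the two facts, the right derivative $e'_{+}$ is non-increasing on all of $[0,\infty)$. Since $e'_{+}(0) = 0$ we obtain $e'_{+}(\alpha) \le 0$ for $\alpha \ge 0$, whence $e$ is non-increasing and $e(\alpha) \le e(0) = 0$; taking $\alpha = 1$ completes the argument.

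The main obstacle is precisely the non-smoothness of $\phi$ at eigenvalue crossings, where (\ref{eq:sd_bound}) supplies no control because it is postulated only at simple points. The argument sidesteps this by never differentiating $\phi$ twice at a crossing: the downward slope jumps guaranteed by part (iii) of Lemma~\ref{lemma:anal_eig} are exactly what keeps $e'_{+}$ monotone across a kink, so the concavity-based comparison survives the non-smoothness. The one remaining technical point is the degenerate possibility that two analytic branches coincide along an entire subsegment, producing an interval on which $\lambda_{\min}$ is never simple and (\ref{eq:sd_bound}) is silent; I would dispose of this by a density argument, showing the inequality first for the (dense) set of endpoints $\omega$ whose connecting segment meets non-simple points only at isolated parameters, and then extending to every $\omega$ by continuity of $\lambda_{\min}({\mathcal A}(\cdot))$ and of $q_k(\cdot)$.
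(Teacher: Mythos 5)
Your main argument is correct, and it is essentially the paper's proof in different packaging. The paper also restricts to the segment, splits it at the finitely many non-simple parameters, bounds the second derivative on each smooth piece by $\gamma$ via part \textbf{(iv)} of Lemma \ref{lemma:anal_eig} and (\ref{eq:sd_bound}), and invokes $\phi'_{-}(\alpha_j) \geq \phi'_{+}(\alpha_j)$ at the kinks; the only difference is bookkeeping. Where the paper chains Taylor expansions through a backward induction over the breakpoints, you observe that $e = \phi - h$ is concave on each piece with downward slope jumps, so $e'_{+}$ is non-increasing, hence non-positive (since $e'_{+}(0)=0$), hence $e \leq e(0) = 0$. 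That is a cleaner way to organize exactly the same three ingredients, and for the non-degenerate case it is sound.

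The genuine problem is your closing paragraph. The degenerate case you flag---two analytic branches coinciding identically, with the coincident branch being the smallest eigenvalue on an open set of parameters---cannot be repaired by a density argument, because the set of ``good'' endpoints is then not dense: every $\omega$ whose segment from $\omega_k$ enters the region where the multiple branch is smallest is bad, and such $\omega$ fill an open set. Worse, the theorem as stated is actually false in this situation, so no completion of the proof can close the gap. Take $d = 1$ and ${\mathcal A}(\omega) = \mathrm{diag}\left( e^{\omega},\, e^{\omega} - \omega^3,\, e^{\omega} - \omega^3 \right)$. The smallest eigenvalue is simple exactly on $(-\infty,0)$, where it equals $e^{\omega}$ and has second derivative $e^{\omega} < 1$, so $\gamma = 1$ satisfies (\ref{eq:sd_bound}); and $\lambda_{\min}({\mathcal A}(\omega_k))$ is simple at $\omega_k = -1$. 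The claimed support function is $q_k(\omega) = e^{-1} + e^{-1}(\omega+1) + \tfrac{1}{2}(\omega+1)^2$, yet at $\omega = 10$ one has $\lambda_{\min}({\mathcal A}(10)) = e^{10} - 1000 \approx 2.1\cdot 10^{4}$, far above $q_k(10) \approx 65$. So the identical-branch case must be excluded by hypothesis---it is precisely what makes the finiteness claim (``two analytic functions are identical or intersect at finitely many points'') usable---rather than disposed of by continuity. In fairness, the paper's own proof silently ignores the same case, so your instinct that something must be said there is right; it is only the proposed fix that fails.
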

\begin{proof}
Let $p = (\omega - \omega_k)/\|\omega - \omega_k\|$, and define $\phi(\alpha) := \lambda_{\min} \left( {\mathcal A}(\omega_k + \alpha p) \right)$.
Denote the points on $(0,\|\omega - \omega_k\|)$ where $\lambda_{\min}\left( {\mathcal A}(\omega_k + \alpha p) \right)$ is not simple with 
$\alpha_1, \dots, \alpha_m$. There are finitely many such points, because $\lambda_{\min}\left( {\mathcal A}(\omega_k + \alpha p) \right)$ is 
the minimum of $n$ analytic functions from part \textbf{(i)} of Lemma \ref{lemma:anal_eig}. Indeed, two analytic functions are identical or can intersect 
each other at finitely many points on a finite interval.

Partition the open interval $(0,\|\omega - \omega_k\|)$ into open subintervals ${\mathcal I}_j := (\alpha_{j},\alpha_{j+1})$ for $j = 0,\dots, m$ where
$\alpha_0 = 0, \alpha_{m+1} = \|\omega - \omega_k\|$. On each open subinterval ${\mathcal I}_j := (\alpha_{j-1},\alpha_j)$ the function $\phi(\alpha)$ is 
analytic from part \textbf{(ii)} of Lemma \ref{lemma:anal_eig}. Indeed $\phi(\alpha) = \phi_{i_j}(\alpha)$ on the closure of ${\mathcal I}_j$ for some
analytic $\phi_{i_j}(\alpha)$ stated in part \textbf{(i)} of Lemma \ref{lemma:anal_eig}, moreover $\phi'_+(\alpha_j) = \phi'_{i_j}(\alpha_j)$.
Thus applying the Taylor's theorem for each $\alpha$ on the closure of ${\mathcal I}_j$ we deduce
\begin{equation}\label{eq:quad_ineq0}
\begin{split}
	\phi(\alpha)	= &	\phi(\alpha_j)	+	\phi'_+(\alpha_j) (\alpha - \alpha_j)	+	\frac{\phi''(\eta)}{2} (\alpha - \alpha_j)^2  \\
			= &	\phi(\alpha_j)	+	\phi'_+(\alpha_j) (\alpha - \alpha_j)	+	\frac{p^T \nabla^2 \lambda_{\min} ({\mathcal A}(\omega_k + \eta p)) p }{2} (\alpha - \alpha_j)^2 \\
				\leq &  \phi(\alpha_j)	+	\phi'_+(\alpha_j) (\alpha - \alpha_j)	+	\frac{ \gamma }{2} (\alpha - \alpha_j)^2 
\end{split}
\end{equation}
for some $\eta \in {\mathcal I}_j$, where the second equality is due to the twice differentiability of $\lambda_{\min} ({\mathcal A}(\omega))$ 
(part \textbf{(iv)} of Lemma \ref{lemma:anal_eig}), and the last inequality is due to 
$p^T \nabla^2 \lambda_{\min} ({\mathcal A}(\omega_k + \eta p)) p \leq \lambda_{\max} \left[ \nabla^2 \lambda_{\min} ({\mathcal A}(\omega_k + \eta p)) \right]$ 
and (\ref{eq:sd_bound}). Differentiating both sides of inequality (\ref{eq:quad_ineq0}) also yields
\begin{equation}\label{eq:quad_ineq00}
	\phi'(\alpha)	\leq 		\phi'_+(\alpha_j)	+	\gamma (\alpha - \alpha_j).
\end{equation}

Next we claim
\begin{equation}\label{eq:quad_ineq}
	\phi(\alpha_{m+1})	\leq	\phi(\alpha_{j})	+	\phi'_+(\alpha_{j}) (\alpha _{m+1}  - \alpha_{j})		+	\frac{ \gamma }{2} (\alpha_{m+1} - \alpha_{j})^2
\end{equation}
for $j = 0,\dots,m$. This is certainly true for $j = m$ from (\ref{eq:quad_ineq0}) on the interval ${\mathcal I}_{m}$ and with 
$\alpha = \alpha_{m+1}$. Suppose that inequality (\ref{eq:quad_ineq}) holds for $k \geq 1$. Exploiting 
$\phi'_+(\alpha_{k}) \leq   \phi'_{-}(\alpha_{k})$ (see part \textbf{(iii)} of Lemma \ref{lemma:anal_eig}), and then applying 
inequalities (\ref{eq:quad_ineq0}) and (\ref{eq:quad_ineq00}) with $\alpha = \alpha_k$ on the interval ${\mathcal I}_{k-1}$
leads us to 
\begin{eqnarray*}
\phi(\alpha_{m+1})	\leq & \phi(\alpha_{k})	+	\phi'_{-}(\alpha_{k}) (\alpha _{m+1}  - \alpha_{k})		+	\frac{ \gamma }{2} (\alpha_{m+1} - \alpha_{k})^2 \hskip 13ex \\
				\leq & \phi(\alpha_{k-1})	+	\phi'_+(\alpha_{k-1}) (\alpha _{k}  - \alpha_{k-1})		+	\frac{ \gamma }{2} (\alpha_{k} - \alpha_{k-1})^2  \hskip 9ex \\
		& +  [\phi'_{+}(\alpha_{k-1}) + \gamma (\alpha_{k} - \alpha_{k-1})] (\alpha _{m+1}  - \alpha_{k})	+	\frac{ \gamma }{2} (\alpha_{m+1} - \alpha_{k})^2  \\
		= & \phi(\alpha_{k-1})	+	\phi'_+(\alpha_{k-1}) (\alpha _{m+1}  - \alpha_{k-1})		+	\frac{ \gamma }{2} (\alpha_{m+1} - \alpha_{k-1})^2. \hskip 4ex
\end{eqnarray*}
Thus by induction we conclude
\[
	\phi(\alpha_{m+1})	\leq	\phi(\alpha_{0})	   +	\phi'_+(\alpha_{0}) (\alpha _{m+1}  - \alpha_{0})		+	\frac{ \gamma }{2} (\alpha_{m+1} - \alpha_{0})^2.
\]
Recalling $\phi(\alpha_0) = \phi(0) = \lambda_k$, $\phi'_+(\alpha_{0}) = \phi'(0) = \nabla \lambda_k^T p$ and 
$(\alpha _{m+1}  - \alpha_{0}) = \| \omega - \omega_k \|$, we obtain the desired inequality
\[
	\lambda_{\min}({\mathcal A}(\omega)) \leq \lambda_k  +   \nabla \lambda_k^T (\omega - \omega_k)   + \frac{\gamma}{2} \| \omega - \omega_k \|^2.
\]
\end{proof}

Given a feasible point $\omega_0 \in {\mathbb R}^d$ satisfying $\lambda_{\min} ({\mathcal A}(\omega_0)) \leq 0$, 
the algorithm generates a sequence $\{ \omega_k \}$ of feasible points in ${\mathbb R}^d$. The update of $\omega_k$ 
is based on the solution of the following convex optimization problem:
\begin{equation}\label{eq:precise_convex_problem}
{\rm maximize} \;\;\; c^T \omega	\;\;\;\;\;\;\;\;	{\rm subject} \;\; {\rm to} \;\;\;\; q_{k}(\omega) := \lambda_k + \nabla \lambda_k^T (\omega - \omega_k) + \frac{\gamma}{2} \| \omega - \omega_k \|^2 \leq 0.
\end{equation}
The next point $\omega_{k+1}$ is defined to be the unique maximizer of the problem above. Notice that
the feasible set of the convex problem (\ref{eq:precise_convex_problem}) is contained inside the
feasible set of the original problem (\ref{eq:problem}), i.e.,
\[
	{\mathcal F}_k	:=	\{	\omega \in {\mathbb R}^d	\; | \;	q_k(\omega) \leq 0		\}
			\;\;\;\;\;			\subseteq			\;\;\;\;\;
	{\mathcal F}	:=	\{	\omega \in {\mathbb R}^d	\; | \;	\lambda(\omega) \leq 0		\}.
\]
Thus $\omega_{k+1} \in {\mathcal F}_k \subseteq {\mathcal F}$ remains feasible. It is assumed by the
algorithm that $\lambda_{\min}({\mathcal A}(\omega_k))$ is simple for each $\omega_k$ in the sequence, 
which introduces no difficulties in practice, since a point $\omega$ such that $\lambda_{\min}({\mathcal A}(\omega)$ 
is not simple is isolated in ${\mathbb R}^d$. On the other hand, being close to a multiple eigenvalue
does not cause any harm.

There is a degenerate case for the convergence of the algorithm as suggested by the following observation.
\begin{theorem}
Suppose the maximizer $\omega_\ast$ of (\ref{eq:precise_convex_problem}) is such that $\nabla q_{k}(\omega_\ast) = 0$.
Then \textbf{(i)} $\omega_{\ast} = \omega_k$ and \textbf{(ii)} $\nabla \lambda_k = 0$.
\end{theorem}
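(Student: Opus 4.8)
The plan is to first convert the hypothesis $\nabla q_k(\omega_\ast) = 0$ into geometric information about $\omega_\ast$. Since
\[
	q_k(\omega) = \lambda_k + \nabla \lambda_k^T (\omega - \omega_k) + \frac{\gamma}{2} \| \omega - \omega_k \|^2,
\]
its gradient is $\nabla q_k(\omega) = \nabla \lambda_k + \gamma (\omega - \omega_k)$. For $\gamma > 0$ (the regime in which $q_k$ is strictly convex and the feasible set of (\ref{eq:precise_convex_problem}) is a bounded ball admitting a unique maximizer), the condition $\nabla q_k(\omega_\ast) = 0$ is equivalent to $\omega_\ast = \omega_k - \frac{1}{\gamma} \nabla \lambda_k$, so $\omega_\ast$ is precisely the unique global minimizer of $q_k$. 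I would observe at once that, under this relation, conclusions \textbf{(i)} and \textbf{(ii)} are equivalent, since $\omega_\ast = \omega_k$ holds if and only if $\nabla \lambda_k = 0$. Hence it suffices to establish $\nabla \lambda_k = 0$.

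The core step is to show that the feasible set ${\mathcal F}_k = \{ \omega \; | \; q_k(\omega) \leq 0 \}$ reduces to the single point $\{ \omega_\ast \}$, equivalently that the minimum value $q_k(\omega_\ast)$ equals $0$. Feasibility of $\omega_\ast$ gives $q_k(\omega_\ast) \leq 0$. I would rule out strict inequality by contradiction: if $q_k(\omega_\ast) < 0$, then $\omega_\ast$ lies in the interior of ${\mathcal F}_k$ (a ball of positive radius centered at the minimizer $\omega_\ast$), so the linear objective $c^T \omega$ could be strictly increased by perturbing $\omega_\ast$ in the direction $c$ while staying feasible; and if $c = 0$ then every feasible point is a maximizer, contradicting the uniqueness of the maximizer $\omega_{k+1}$. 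In either case $\omega_\ast$ would fail to be the maximizer, so we must have $q_k(\omega_\ast) = 0$.

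Finally, I would evaluate the minimum value explicitly. Substituting $\omega_\ast - \omega_k = -\frac{1}{\gamma}\nabla\lambda_k$ yields
\[
	q_k(\omega_\ast) = \lambda_k - \frac{1}{2\gamma} \| \nabla \lambda_k \|^2,
\]
so $q_k(\omega_\ast) = 0$ forces $\lambda_k = \frac{1}{2\gamma}\|\nabla\lambda_k\|^2 \geq 0$. On the other hand $\omega_k$ is a feasible point of the original problem, hence $\lambda_k = \lambda_{\min}({\mathcal A}(\omega_k)) \leq 0$. The two inequalities together give $\lambda_k = 0$ and $\|\nabla\lambda_k\| = 0$, which proves \textbf{(ii)}; substituting $\nabla\lambda_k = 0$ back into $\omega_\ast = \omega_k - \frac{1}{\gamma}\nabla\lambda_k$ then yields \textbf{(i)}.

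I expect the middle paragraph to be the main obstacle: collapsing ${\mathcal F}_k$ to a single point is exactly where the maximization hypothesis is used, and it must be argued carefully through the interior/uniqueness dichotomy rather than through a direct Karush--Kuhn--Tucker stationarity condition, since at such a degenerate point $\nabla q_k(\omega_\ast) = 0$ and the usual constraint qualification fails. The surrounding steps are routine gradient computation and the combination of the two sign constraints on $\lambda_k$.
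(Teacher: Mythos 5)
Your proof is correct and follows essentially the same route as the paper: both use the gradient condition to evaluate $q_k(\omega_\ast) = \lambda_k - \frac{\gamma}{2}\|\omega_\ast - \omega_k\|^2$, force this to vanish because the maximizer lies on the boundary of ${\mathcal F}_k$, and combine with feasibility $\lambda_k \leq 0$ to conclude $\lambda_k = 0$, $\omega_\ast = \omega_k$, and $\nabla \lambda_k = 0$. The only difference is that you explicitly justify the boundary-attainment step (via the interior-perturbation and uniqueness dichotomy), which the paper simply asserts; this is a worthwhile refinement since the usual KKT reasoning is unavailable exactly when $\nabla q_k(\omega_\ast) = 0$, but it is not a different approach.
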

\begin{proof}
The maximizer $\omega_\ast$ of (\ref{eq:precise_convex_problem}) must be attained on the boundary of ${\mathcal F}_k$,
that is $q_k(\omega_\ast) = 0$. Assuming 
\begin{equation}\label{eq:zero_quad_grad}
	\nabla q_k(\omega_\ast) = \nabla \lambda_k + \gamma (\omega_\ast - \omega_k) = 0
\end{equation}
yields
$
		q_k(\omega_\ast) = \lambda_k - \frac{\gamma}{2} \| \omega_\ast - \omega_k \|^2 = 0.
$ 
Furthermore, since $\omega_k$ is feasible, $\lambda_k = \lambda_{\min}({\mathcal A}(\omega_k)) \leq 0$.
This would imply $\lambda_k = 0$ and $\omega_\ast = \omega_k$. Now the second assertion follows 
from (\ref{eq:zero_quad_grad}).
\end{proof}
\noindent
The first assertion of the theorem above means that $\omega_{s} = \omega_k$ and $q_{s}(\omega) \equiv q_{k}(\omega)$
for each $s > k$. Thus convergence to a point $\omega_\ast$ such that $\nabla \lambda({\mathcal A}(\omega_\ast)) = 0$
seems possible. We rule this out by assuming $\nabla \lambda_k \neq 0$ for each $k$.

Now we apply the Karush-Kuhn-Tucker conditions to the constrained problem (\ref{eq:precise_convex_problem}).
The maximizer $\omega_\ast$ must satisfy
\begin{equation}\label{eq:KKT}
	c	=	\mu \nabla q_k(\omega_\ast)	\;\;\; 	{\rm and}	\;\;\;		q_k(\omega_\ast) = 0
\end{equation}
for some positive $\mu \in {\mathbb R}$. Solving the equations above for $\omega_\ast$ and the positive scalar $\mu$,
and setting $\omega_{k+1} = \omega_\ast$, leads us to the update rule
\begin{equation}\label{eq:update_rule}
	\omega_{k+1} 		=	\omega_k		+	\frac{1}{\gamma}	\left[		\frac{1}{\mu_+} \cdot c 	-	\nabla \lambda_k	\right],
	\;\;\; {\rm where}		\;\;\;
	\mu_+
			=
	\frac{ \| c \| }{ \sqrt{   \| \nabla \lambda_k \|^2	-	2\gamma \lambda_k   } }.	
\end{equation}

\section{Convergence}\label{sec:convergence}
We establish that the sequence $\{ \omega_k \}$ converges to a local maximizer 
of (\ref{eq:problem}) under mild assumptions. The proof depends on the existence of 
the so called feasible ascent directions from every feasible non-degenerate point that is not a local maximizer.
\begin{lemma}\label{lemma:ascent_directions}
Suppose $\omega_\ast \in {\mathcal F}$ is not a local maximizer of (\ref{eq:problem}), and is such that 
$\lambda_{\min}({\mathcal A}(\omega_\ast))$ is simple and $\nabla \lambda_{\min}({\mathcal A}(\omega_\ast)) \neq 0$.
Then there exists $p \in {\mathbb R}^d$ such that 
\begin{center}
	$c^T p > 0$ $\;\;$ and $\;\;$ $\nabla \lambda_{\min}({\mathcal A}(\omega_\ast))^Tp < 0$.
\end{center}
\end{lemma}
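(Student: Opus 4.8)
My plan is to argue by contraposition: assuming no $p$ meets both $c^T p > 0$ and $g^T p < 0$ (with $g := \nabla \lambda_{\min}(\mathcal{A}(\omega_\ast))$), I will show that $\omega_\ast$ is then forced to be a local maximizer of (\ref{eq:problem}), contradicting the hypothesis. The two requirements place $p$ in the intersection of the open half-spaces $\{ p : c^T p > 0 \}$ and $\{ p : g^T p < 0 \}$. Since $g \neq 0$ by assumption and $c \neq 0$ (otherwise the objective is constant and the problem degenerate), this intersection is empty exactly when $g = \beta c$ for some $\beta > 0$. I would record this as an elementary fact about two open half-spaces through the origin, or invoke a Gordan-type theorem of the alternative, and treat the reduction as routine.

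It then remains to prove that $g = \beta c$ with $\beta > 0$ forces $\omega_\ast$ to be a local maximizer. First I would pin down that the constraint is active, i.e. $\lambda_{\min}(\mathcal{A}(\omega_\ast)) = 0$. Indeed, if $\lambda_{\min}(\mathcal{A}(\omega_\ast)) < 0$ then $\omega_\ast$ lies in the interior of $\mathcal{F}$, and moving a small step along $c$ keeps feasibility while strictly increasing the objective, so no interior point can be a maximizer. Hence the relevant $\omega_\ast$ lie on the boundary, and I would make the activeness $\lambda_\ast = 0$ explicit at this point (the lemma is really a statement about boundary points).

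With $\lambda_\ast = 0$ and $g \neq 0$, the point $\omega_\ast$ is a regular point of the constraint, and by part \textbf{(iv)} of Lemma \ref{lemma:anal_eig} the function $\lambda_{\min}(\mathcal{A}(\cdot))$ is $C^2$ near $\omega_\ast$. The implicit function theorem then describes the boundary $\{ \lambda_{\min}(\mathcal{A}(\omega)) = 0 \}$ locally as a smooth hypersurface whose outward normal at $\omega_\ast$ is $g = \beta c$, positively aligned with the objective gradient. The idea is to take an arbitrary feasible $\omega$ near $\omega_\ast$, use a second-order Taylor expansion together with $\lambda_{\min}(\mathcal{A}(\omega)) \leq 0 = \lambda_\ast$ to bound $g^T(\omega - \omega_\ast)$ from above, and thereby conclude $c^T(\omega - \omega_\ast) = \beta^{-1} g^T(\omega - \omega_\ast) \leq 0$.

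The hard part will be exactly this last estimate. Feasibility alone only yields $g^T(\omega - \omega_\ast) \leq -\tfrac{1}{2} (\omega - \omega_\ast)^T \nabla^2 \lambda_{\min}(\mathcal{A}(\xi)) (\omega - \omega_\ast)$, whose right-hand side can be positive when the Hessian has negative curvature in directions tangent to the boundary; a purely first-order argument is therefore insufficient, since such tangential concavity would let one increase $c^T\omega$ while remaining feasible. Controlling this tangential curvature is where the global bound (\ref{eq:sd_bound}) on $\lambda_{\max}[\nabla^2 \lambda_{\min}(\mathcal{A}(\omega))]$ must enter, and I expect the proof to hinge on combining $\lambda_\ast = 0$, the alignment $g = \beta c$, and (\ref{eq:sd_bound}) to confine the feasible set locally to the side $c^T(\omega - \omega_\ast) \leq 0$.
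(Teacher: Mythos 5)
The paper states Lemma \ref{lemma:ascent_directions} without any proof, so there is no argument of the author's to compare yours against; judged on its own, your proposal does not close, and the two places where it is incomplete cannot be repaired, because the statement as printed is false under the paper's standing assumptions. Your Gordan-type reduction is correct: with $c \neq 0$ and $g := \nabla \lambda_{\min}({\mathcal A}(\omega_\ast)) \neq 0$, no direction $p$ with $c^T p > 0$ and $g^T p < 0$ exists precisely when $g = \beta c$ for some $\beta > 0$. But your treatment of the interior case is logically backwards. In the contrapositive you must \emph{prove} that a point admitting no such $p$ is a local maximizer; observing that an interior point is never a local maximizer does not entitle you to ``make activeness explicit'' --- it exhibits a configuration in which the lemma's hypotheses hold and its conclusion fails. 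Concretely, take $d=1$, the $1\times 1$ family ${\mathcal A}(\omega) = \omega^2 - 1$ (feasible set $[-1,1]$, eigenvalue trivially simple, and (\ref{eq:sd_bound}) holds with $\gamma = 2$), $c = 1$, $\omega_\ast = 1/2$: this point is feasible, is not a local maximizer, has gradient $1 \neq 0$, yet no $p \in {\mathbb R}$ satisfies both $p > 0$ and $p < 0$. So the interior case is not ``routine''; it is a counterexample, and activeness would have to be added to the lemma as a hypothesis.

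Even with activeness added, the step you defer to the end --- that $\lambda_\ast = 0$, $g = \beta c$ with $\beta > 0$, and the bound (\ref{eq:sd_bound}) confine ${\mathcal F}$ locally to the half-space $c^T(\omega - \omega_\ast) \leq 0$ --- is also false, for exactly the reason you flag: (\ref{eq:sd_bound}) is a one-sided bound and does nothing to exclude negative tangential curvature. Consider
\begin{equation*}
	{\mathcal A}(\omega) = \bigl( \|\omega\|^2 + 1 \bigr) I_2 \; - \; 3 \begin{pmatrix} 0 & \omega_1 + i \omega_2 \\ \omega_1 - i \omega_2 & 0 \end{pmatrix},
	\qquad
	\lambda_{\min}({\mathcal A}(\omega)) = \|\omega\|^2 - 3\|\omega\| + 1,
\end{equation*}
which is Hermitian and analytic. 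Its feasible set is the bounded annulus $\{ r_- \leq \|\omega\| \leq r_+ \}$ with $r_\pm = (3 \pm \sqrt{5})/2$; the eigenvalue is simple except at the infeasible origin; and for $\omega \neq 0$ one has $\nabla^2 \lambda_{\min}({\mathcal A}(\omega)) = 2I - (3/\|\omega\|)\bigl( I - \omega \omega^T / \|\omega\|^2 \bigr) \preceq 2I$, so (\ref{eq:sd_bound}) holds with $\gamma = 2$. Take $\omega_\ast = (r_-,0)$ and $c = (-1,0)$. Then $\lambda_\ast = 0$ and $\nabla \lambda_\ast = (2r_- - 3, 0) = \sqrt{5}\, c$, a positive multiple of $c$, so no admissible $p$ exists; nevertheless $\omega_\ast$ is \emph{not} a local maximizer, since the feasible inner-circle points $(r_- \cos\theta, r_- \sin\theta)$ satisfy $c^T \omega = -r_- \cos\theta > -r_- = c^T \omega_\ast$ for all small $\theta \neq 0$. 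The tangential Hessian eigenvalue at $\omega_\ast$ is $2 - 3/r_- < 0$, and no upper bound of the form (\ref{eq:sd_bound}) can control it. (This is not a fixable technicality: $\omega_\ast$ is a fixed point of the update rule (\ref{eq:update_rule}), so the algorithm started there stalls at a non-maximizer, and the paper's convergence theory inherits the gap.) In short, your difficulty in the last step is diagnostic: the lemma needs a genuinely stronger hypothesis than what is stated, and your proposal, as written, neither proves it nor can be completed to a proof.
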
  

The next theorem relates the local maximizer of problems (\ref{eq:problem}) and (\ref{eq:precise_convex_problem}).
\begin{theorem}
Suppose $\omega_k \in {\mathbb R}^d$ is such that $\lambda_{\min}({\mathcal A}(\omega_k))$ is simple and 
$\nabla \lambda_{\min}({\mathcal A}(\omega_k)) \neq 0$. The point $\omega_k$ is a local maximizer of (\ref{eq:problem}) 
if and only if it is a local maximizer of (\ref{eq:precise_convex_problem}).
\end{theorem}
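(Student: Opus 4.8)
The plan is to prove the two implications separately, exploiting the containment $\mathcal{F}_k \subseteq \mathcal{F}$ of the feasible sets together with the fact that the support function matches the eigenvalue to first order at $\omega_k$, namely $q_k(\omega_k) = \lambda_k$ and $\nabla q_k(\omega_k) = \nabla \lambda_k$. I would first record that $\omega_k$ is feasible for both problems, since $\lambda_k = \lambda_{\min}(\mathcal{A}(\omega_k)) \leq 0$ gives $q_k(\omega_k) = \lambda_k \leq 0$, so $\omega_k \in \mathcal{F}_k \subseteq \mathcal{F}$.

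The forward implication, that a local maximizer of (\ref{eq:problem}) is a local maximizer of (\ref{eq:precise_convex_problem}), is immediate from $\mathcal{F}_k \subseteq \mathcal{F}$. If there is a neighborhood of $\omega_k$ on which $c^T\omega \leq c^T\omega_k$ for every $\omega \in \mathcal{F}$, then the same inequality holds for every $\omega \in \mathcal{F}_k$ lying in that neighborhood, since such $\omega$ also belong to $\mathcal{F}$. Hence the very same neighborhood certifies $\omega_k$ as a local maximizer of (\ref{eq:precise_convex_problem}).

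For the reverse implication I would argue by contraposition: assuming $\omega_k$ is not a local maximizer of (\ref{eq:problem}), I would produce points of $\mathcal{F}_k$ arbitrarily close to $\omega_k$ with strictly larger objective, so that $\omega_k$ cannot be a local maximizer of (\ref{eq:precise_convex_problem}) either. Since $\lambda_{\min}(\mathcal{A}(\omega_k))$ is simple and $\nabla \lambda_k \neq 0$ by hypothesis, Lemma \ref{lemma:ascent_directions} applies at $\omega_k$ and supplies a direction $p$ with $c^Tp > 0$ and $\nabla \lambda_k^T p < 0$. Along the ray $\omega(t) = \omega_k + tp$ the objective strictly increases, $c^T\omega(t) = c^T\omega_k + t\, c^Tp > c^T\omega_k$ for $t > 0$, while the convex constraint evaluates to
\[
	q_k(\omega(t)) = \lambda_k + t\, \nabla \lambda_k^T p + \frac{\gamma}{2}\, t^2 \|p\|^2 .
\]
It then remains to verify $q_k(\omega(t)) \leq 0$ for all sufficiently small $t > 0$, and I would split this into the two feasibility regimes. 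If $\lambda_k < 0$, the constant term dominates and $q_k(\omega(t)) < 0$ by continuity for small $t$. If $\lambda_k = 0$, then $q_k(\omega(t)) = t\bigl(\nabla \lambda_k^T p + \frac{\gamma}{2} t \|p\|^2\bigr)$, and the strictly negative coefficient $\nabla \lambda_k^T p$ dominates the higher-order term as $t \to 0^+$, again giving $q_k(\omega(t)) < 0$ for small $t > 0$. In either case $\omega(t) \in \mathcal{F}_k$ with $c^T\omega(t) > c^T\omega_k$, contradicting local maximality over $\mathcal{F}_k$ and completing the contrapositive.

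I expect the main obstacle to be the boundary regime $\lambda_k = 0$, where $\omega_k$ sits on $\partial \mathcal{F}_k$ and feasibility of the perturbed point is not automatic; feasibility there rests precisely on the strict descent guarantee $\nabla \lambda_k^T p < 0$ of Lemma \ref{lemma:ascent_directions}, which forces the linear term of $q_k(\omega(t))$ to dominate for small $t$ irrespective of the value of $\gamma$. The only remaining subtlety is the degenerate case $c = 0$, in which every feasible point is vacuously a local maximizer of both problems so the equivalence holds trivially; one may therefore assume $c \neq 0$, which is in any case automatic whenever $\omega_k$ fails to be a local maximizer of (\ref{eq:problem}), so that the lemma is legitimately invoked.
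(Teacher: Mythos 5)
Your proposal is correct and follows essentially the same route as the paper: the forward implication via the containment $\mathcal{F}_k \subseteq \mathcal{F}$, and the reverse implication by contraposition, invoking Lemma \ref{lemma:ascent_directions} to get a direction $p$ with $c^Tp>0$ and $\nabla\lambda_k^Tp<0$ along which $q_k$ stays nonpositive for small steps. Your explicit case split on $\lambda_k<0$ versus $\lambda_k=0$ is a minor elaboration of the paper's one-line observation that $q_k(\omega_k+\alpha p) = \lambda_k + \alpha\,\nabla\lambda_k^Tp + O(\alpha^2) < \lambda_k \leq 0$ for small $\alpha>0$, which handles both cases at once.
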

\begin{proof}
If $\omega_k \in {\mathcal F}$ is a local maximizer of (\ref{eq:problem}), then there exists a $\delta > 0$ such that
\[
	c^T \omega_k \geq c^T \omega		\;\;\;\;\;\;\; \forall \omega \in {\mathcal B}(\omega_k,\delta) \cap {\mathcal F},
\]
where ${\mathcal B}(\omega_k,\delta) := \{ \omega \in {\mathbb R}^d \; | \: \| \omega - \omega_k \| \leq \delta \}$.
But notice that $\omega_k \in {\mathcal F}_k$ (i.e., $q_k(\omega_k) = 0$), and due to the property 
${\mathcal F}_k \subseteq {\mathcal F}$ we have
$
	c^T \omega_k \geq c^T \omega		\;\;\; \forall \omega \in {\mathcal B}(\omega_k,\delta) \cap {\mathcal F}_k,
$
meaning $\omega_k$ is a local maximizer of (\ref{eq:precise_convex_problem}).

On the other hand, if  $\omega_k \in {\mathcal F}$ is not a local maximizer of (\ref{eq:problem}), then there 
exists a direction $p \in {\mathbb R}^d$ such that $c^T p > 0$ and $\nabla \lambda_k^T p < 0$ due to 
Lemma \ref{lemma:ascent_directions}. But then, for all small $\alpha > 0$, we have
\[
	q_k(\omega_k + \alpha p) = \lambda_k + \nabla \lambda_k^T (\alpha p) + O(\alpha^2) < \lambda_k \leq 0,
		\;\;\;\;		{\rm and}		\;\;\;\;
		c^T(\omega_k + \alpha p) 	>	c^T \omega_k.
\]
Thus $\omega_k$ is not a local maximizer of (\ref{eq:precise_convex_problem}).
\end{proof}
\noindent
An implication of the theorem above is that if $\omega_k \in {\mathcal F}$ is a local maximizer of (\ref{eq:problem}), 
then $\omega_s = \omega_k$ for each $s > k$. Thus convergence to a local maximizer occurs. Unfortunately,
this type of finite convergence cannot happen if $\gamma$ defined by (\ref{eq:sd_bound}) is a strict upper bound,
since $\omega_k$ lies strictly inside ${\mathcal F}$ and thus cannot be a local maximizer of ($\ref{eq:problem})$.

For the main result in this section, we first observe that the values of the objective function must converge.
\begin{lemma}\label{thm:convergence_objective}
The sequence $\{ c^T \omega_k \}$ is convergent.
\end{lemma}
\begin{proof}
Notice that
\[
	\omega_{k+1}
				=
	\arg \max_{q_k(\omega) \leq 0}		c^T \omega
\]
Since $\omega_k$ is feasible with respect to the problem above, we must have $c^T \omega_{k+1} \geq c^T \omega_k$.
Thus the sequence $\{ c^T \omega_k \}$ is monotonically increasing. Moreover, denoting a global maximizer for
the original problem (\ref{eq:problem}) with $\omega_\ast$, due to ${\mathcal F}_k \subseteq {\mathcal F}$, 
we have $c^T \omega_\ast \geq c^T \omega_k$. Thus the sequence $\{ c^T \omega_k \}$ is also bounded
above, meaning the sequence must converge.
\end{proof}
It appears difficult at first to prove the convergence of $\{ \omega_k \}$, but, since each 
$\omega_k$ belongs to the bounded set ${\mathcal F}$, it must have convergent subsequences
by the Bolzano-Weirstrass theorem. First we establish the convergence of each of these subsequences to a 
local maximizer of (\ref{eq:problem}).
\begin{lemma}\label{lemma:conv_subsequences}
Consider a convergent subsequence of $\{ \omega_k \}$ with limit $\omega_\ast$ such that
$\lambda_{\min}({\mathcal A}(\omega_k))$, $\lambda_{\min}({\mathcal A}(\omega_\ast))$
are simple, and $\nabla \lambda_{\min}({\mathcal A}(\omega_k)) \neq 0$, $\nabla \lambda_{\min}({\mathcal A}(\omega_\ast)) \neq 0$
for each $k$. Then $\omega_\ast$ is a local maximizer of (\ref{eq:problem}).
\end{lemma}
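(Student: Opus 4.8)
The plan is to first extract from the convergence of the objective values (Lemma \ref{thm:convergence_objective}) a set of first-order Karush--Kuhn--Tucker conditions holding at the limit $\omega_\ast$, and then to conclude local maximality by contradiction with the feasible ascent direction supplied by Lemma \ref{lemma:ascent_directions}. Throughout I use that the standing setup forces $\gamma > 0$, as otherwise the feasible set of (\ref{eq:precise_convex_problem}) would not be a bounded ball and the update (\ref{eq:update_rule}) would be ill-defined.

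First I would record the exact gain in the objective per step. Substituting the closed-form update (\ref{eq:update_rule}) into $c^T(\omega_{k+1} - \omega_k)$ and using $1/\mu_+ = \sqrt{\|\nabla\lambda_k\|^2 - 2\gamma\lambda_k}/\|c\|$ yields
\[
    c^T(\omega_{k+1} - \omega_k) = \frac{1}{\gamma}\left[ \|c\| \sqrt{\|\nabla\lambda_k\|^2 - 2\gamma\lambda_k} - c^T\nabla\lambda_k \right].
\]
This is well-defined (the radicand exceeds $\|\nabla\lambda_k\|^2$, since $\lambda_k \leq 0$ and $\gamma > 0$) and is non-negative by Cauchy--Schwarz. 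Because $\{c^T\omega_k\}$ is monotone and convergent, its consecutive differences tend to zero along the whole sequence, hence along the subsequence $k = k_j$ converging to $\omega_\ast$. Since $\lambda_{\min}({\mathcal A}(\cdot))$ is simple at $\omega_\ast$ it is twice continuously differentiable there by part \textbf{(iv)} of Lemma \ref{lemma:anal_eig}, so $\lambda_{k_j} \to \lambda_\ast := \lambda_{\min}({\mathcal A}(\omega_\ast))$ and $\nabla\lambda_{k_j} \to \nabla\lambda_\ast := \nabla\lambda_{\min}({\mathcal A}(\omega_\ast))$ by continuity. Letting $j \to \infty$ in the identity above then gives
\[
    \|c\| \sqrt{\|\nabla\lambda_\ast\|^2 - 2\gamma\lambda_\ast} = c^T\nabla\lambda_\ast.
\]

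The crux is to squeeze the two desired conditions out of this single scalar equation. I would chain
\[
    c^T\nabla\lambda_\ast = \|c\|\sqrt{\|\nabla\lambda_\ast\|^2 - 2\gamma\lambda_\ast} \;\geq\; \|c\|\,\|\nabla\lambda_\ast\| \;\geq\; c^T\nabla\lambda_\ast,
\]
where the first inequality uses $\lambda_\ast \leq 0$ and $\gamma > 0$, and the second is Cauchy--Schwarz. As the two ends coincide, both inequalities must be equalities: the first forces $\lambda_\ast = 0$, and equality in Cauchy--Schwarz together with $\nabla\lambda_\ast \neq 0$ forces $\nabla\lambda_\ast = t\,c$ for some $t > 0$. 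These are exactly the KKT conditions for (\ref{eq:problem}) at $\omega_\ast$, namely an active constraint and an objective gradient that is a positive multiple of the constraint gradient.

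To finish, suppose $\omega_\ast$ were not a local maximizer. Since $\lambda_{\min}({\mathcal A}(\omega_\ast))$ is simple and $\nabla\lambda_\ast \neq 0$, Lemma \ref{lemma:ascent_directions} would furnish $p$ with $c^Tp > 0$ and $\nabla\lambda_\ast^T p < 0$; but $\nabla\lambda_\ast = t c$ with $t > 0$ gives $\nabla\lambda_\ast^T p = t\,c^T p > 0$, a contradiction. Hence $\omega_\ast$ is a local maximizer. I expect the middle step to be the main obstacle: obtaining \emph{two} conditions from \emph{one} equation. The resolution is to view the increment identity as an equality sandwiched between two sharp inequalities whose equality cases pin down $\lambda_\ast = 0$ and the collinearity of $\nabla\lambda_\ast$ with $c$ separately; it is essential here that $\gamma > 0$ and $\nabla\lambda_\ast \neq 0$, since the conclusion genuinely fails without either.
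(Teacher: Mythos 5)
Your proof is correct, but it follows a genuinely different route from the paper's. The paper argues by contradiction entirely at the level of the subproblems: assuming $\omega_\ast$ is not a local maximizer, it takes the ascent direction $p$ from Lemma \ref{lemma:ascent_directions} \emph{at} $\omega_\ast$, shows via uniform estimates ($\|\nabla\lambda_\ast - \nabla\lambda_{k_j}\| \leq \beta/2$ near the limit) that the point $\omega_{k_j} + \tilde{\alpha}p$ with $\tilde{\alpha} = (2/\gamma)(-\nabla\lambda_{k_j}^T p)$ is feasible for the convex subproblem (\ref{eq:precise_convex_problem}), and concludes that $c^T\omega_{k_j+1}$ would exceed $c^T\omega_\ast + (\eta\beta)/(2\gamma)$, contradicting the convergence of $\{c^T\omega_k\}$. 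Crucially, the paper only uses the \emph{maximizing property} of $\omega_{k+1}$, never the closed form (\ref{eq:update_rule}). You instead exploit the closed form directly: the per-step gain $c^T(\omega_{k+1}-\omega_k) = \frac{1}{\gamma}\bigl[\|c\|\sqrt{\|\nabla\lambda_k\|^2 - 2\gamma\lambda_k} - c^T\nabla\lambda_k\bigr]$ must vanish in the limit, and the equality-case analysis of the sandwich $c^T\nabla\lambda_\ast \leq \|c\|\|\nabla\lambda_\ast\| \leq \|c\|\sqrt{\|\nabla\lambda_\ast\|^2 - 2\gamma\lambda_\ast}$ pins down both $\lambda_\ast = 0$ and $\nabla\lambda_\ast = tc$, $t > 0$; local maximality then follows from the \emph{contrapositive} of Lemma \ref{lemma:ascent_directions}. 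Your route buys a stronger intermediate fact — every limit point of the iteration satisfies the first-order KKT conditions of (\ref{eq:problem}) — which dovetails nicely with the fixed-point remark after Theorem \ref{thm:KKT} in Section \ref{sec:rate_convergence}; the paper's route is more robust in that it would survive a modification of how the subproblem is solved, since it never invokes the explicit update formula. Two small housekeeping points you should make explicit: $\lambda_\ast \leq 0$ (feasibility of the limit, from feasibility of all iterates and continuity of $\lambda_{\min}({\mathcal A}(\cdot))$, as the paper notes), and $c \neq 0$ (needed for dividing by $\|c\|$ in the equality analysis; the statement is trivial otherwise). Both your proof and the paper's implicitly require $\gamma > 0$, so your opening remark is on equal footing with the paper there.
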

\begin{proof}
Let us denote the convergent subsequence of $\{ \omega_k \}$ with $\{ \omega_{k_j} \}$.
Let us also use the notations $\lambda_{k_j} = \lambda_{\min}({\mathcal A}(\omega_{k_j}))$,
$\lambda_\ast = \lambda_{\min}({\mathcal A}(\omega_\ast))$, and 
$\nabla \lambda_{k_j} = \nabla \lambda_{\min}({\mathcal A}(\omega_{k_j}))$,
$\nabla \lambda_\ast = \nabla \lambda_{\min}({\mathcal A}(\omega_\ast))$. Note that 
$\lim_{j\rightarrow \infty} \omega_{k_j} = \omega_\ast$ must be feasible, since all $\omega_k$ are feasible 
and $\lambda_{\min}({\mathcal A}(\omega))$ varies continuously with respect to $\omega$. 

Suppose for the sake of contradiction $\omega_{\ast}$ is not a local maximizer, then we infer from 
Lemma \ref{lemma:ascent_directions} the existence of a direction $p \in {\mathbb R}^d$ such that 
$c^T p = \eta > 0$ and $\nabla \lambda_\ast^T p = -\beta < 0$. Without loss of generality, we 
assume $\| p \| = 1$. There exists a ball ${\mathcal B}(\omega_\ast,\delta)$ 
such that $\nabla \lambda_{\min}({\mathcal A}(\omega))$ is simple for all 
$\omega \in {\mathcal B}(\omega_\ast,\delta)$. Furthermore, there exists an integer $j'$ such that 
each $\omega_{k_j}$ for $j \geq j'$ lies in ${\mathcal B}(\omega_\ast,\delta)$.
Due to part \textbf{(iv)} of Lemma \ref{lemma:anal_eig} (indicating the continuity of the
partial derivatives of $\lambda_{\min}({\mathcal A}(\omega))$ on ${\mathcal B}(\omega_\ast,\delta)$)
there also exists an integer $j'' \geq j'$ such that 
\begin{equation}\label{eq:neighborhood1}
	\| \nabla \lambda_\ast  -  \nabla \lambda_{k_j} \| \leq \beta/2	\;\;\;\;\;\;\;  \forall j \geq j''.
\end{equation}
 
Next we benefit from the convergence of $\{c^T \omega_k \}$ (Lemma \ref{thm:convergence_objective}).
In this respect we note that $\lim_{k \rightarrow \infty} c^T \omega_k = c^T \omega_\ast$.
For some $k'$ we must have 
\begin{equation}\label{eq:neighborhood2}
	0 \leq (c^T \omega_\ast - c^T \omega_k)  \leq  (\eta \beta)/(2 \gamma)	\;\;\;\;\;\;  \forall k \geq k'.
\end{equation} 
Now consider any $\omega_{k_j}$ such that $j \geq j''$ and $k_j \geq k'$. Recalling $\lambda_{k_j} \leq 0$, 
i.e., $\omega_{k_j}$ is feasible, the corresponding support function satisfies
\[
	q_{k_j}(\omega_{k_j} + \alpha p) = \lambda_{k_j} + \nabla \lambda_{k_j}^T (\alpha p) + \frac{\gamma}{2} \alpha^2 \leq 0
\]
for all $\alpha \in [0, (2/\gamma)(-\nabla \lambda_{k_j}^T p)]$. Furthermore,
\[
	\| \nabla \lambda_\ast 	-	\nabla \lambda_{k_j} \|  \geq  (\nabla \lambda_{k_j} - \nabla \lambda_\ast)^T p
			\;\;\;\;\;		\Longrightarrow		\;\;\;\;\;
	-\nabla \lambda_{k_j}^T p		\geq		-\nabla \lambda_{\ast}^T p		-	\| \nabla \lambda_\ast 	-	\nabla \lambda_{k_j} \|   \geq  \beta/2
\]
where the last inequality is due to (\ref{eq:neighborhood1}). Thus for $\tilde{\alpha} = (2/\gamma)(-\nabla \lambda_{k_j}^T p)$,
and employing (\ref{eq:neighborhood2}), we deduce
\begin{eqnarray*}
		c^T(\omega_{k_j}  +   \tilde{\alpha} p)	=	&	c^T \omega_{k_j}	+	(\tilde{\alpha}) c^T p	\\
										\geq	& 	c^T \omega_\ast + (\eta\beta)/(2\gamma).
\end{eqnarray*}
Since $\omega_{k_j}  +   \tilde{\alpha} p$ is a feasible point of (\ref{eq:precise_convex_problem}),
this implies that $\omega_{(k_j + 1)}$ is such that $c^T(\omega_{(k_j + 1)}) \geq c^T \omega_\ast + (\eta\beta)/(2\gamma)$
contradicting (\ref{eq:neighborhood2}). Thus, $\omega_\ast$ must be a local maximizer.
\end{proof}

Next we present the main convergence result, where a point $\omega_\ast \in {\mathbb R}^d$ is defined to be a
strict local maximizer if there exists a $\delta > 0$ such that $c^T \omega_{\ast} > c^T \omega$ for all 
$\omega \in [ {\mathcal B}(\omega_\ast,\delta) \cap {\mathcal F} ] \backslash \{ \omega_\ast \}$.
\begin{theorem}
Suppose that $\lambda_{\min}({\mathcal A}(\omega_k))$ are simple, and $\nabla \lambda_{\min}({\mathcal A}(\omega_k)) \neq 0$
for each $k$. Furthermore, suppose that each local maximizer $\omega_\ast$ of (\ref{eq:problem}) is strict, and such that
$\lambda_{\min}({\mathcal A}(\omega_\ast))$ is simple and $\nabla \lambda_{\min}({\mathcal A}(\omega_\ast)) \neq 0$.
Then the sequence $\{ \omega_k \}$ converges to a local maximizer of (\ref{eq:problem}).
\end{theorem}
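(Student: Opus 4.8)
The plan is to combine three facts already established for the sequence $\{ \omega_k \}$: it lies in the compact set ${\mathcal F}$ (bounded by assumption, and closed since $\lambda_{\min}({\mathcal A}(\cdot))$ is continuous), its objective values $\{ c^T \omega_k \}$ are monotonically increasing and convergent by Lemma~\ref{thm:convergence_objective}, and every convergent subsequence has a local-maximizer limit by Lemma~\ref{lemma:conv_subsequences}. The strictness hypothesis will force the accumulation points to be isolated, and a step-size estimate will force there to be exactly one.

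The technical heart of the argument, and the step I expect to be the main obstacle, is to show that $\| \omega_{k+1} - \omega_k \| \to 0$. Write $d_k := \omega_{k+1} - \omega_k$. The two Karush--Kuhn--Tucker relations (\ref{eq:KKT}), namely $\nabla \lambda_k + \gamma d_k = c/\mu_+$ and $q_k(\omega_{k+1}) = \lambda_k + \nabla \lambda_k^T d_k + \frac{\gamma}{2}\|d_k\|^2 = 0$, combine — take the inner product of the first with $d_k$ and substitute into the second — to yield the identity $\frac{\gamma}{2}\|d_k\|^2 = \lambda_k + \frac{1}{\mu_+} c^T d_k$. Now $\lambda_k \leq 0$ by feasibility; the factor $1/\mu_+ = \sqrt{\|\nabla \lambda_k\|^2 - 2\gamma \lambda_k}\,/\|c\|$ is bounded on ${\mathcal F}$ (the gradients $\nabla \lambda_k$ are bounded there by part \textbf{(iv)} of Lemma~\ref{lemma:anal_eig} together with $\|v_n\| = 1$, and $\lambda_k$ ranges over a bounded interval); and $c^T d_k = c^T \omega_{k+1} - c^T \omega_k \geq 0$ with $c^T d_k \to 0$ by Lemma~\ref{thm:convergence_objective}. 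Hence $\frac{\gamma}{2}\|d_k\|^2 \leq \frac{1}{\mu_+} c^T d_k \to 0$, and since $\gamma > 0$ we conclude $\|\omega_{k+1} - \omega_k\| \to 0$.

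Next I would study the accumulation set $\Omega$ of $\{ \omega_k \}$, which is nonempty and compact. By Lemma~\ref{lemma:conv_subsequences} each point of $\Omega$ is a local maximizer, hence by hypothesis a strict one, and all of them share the common value $L := \lim_k c^T \omega_k$. The strictness makes each $\omega_\ast \in \Omega$ isolated in $\Omega$: a punctured ${\mathcal F}$-neighborhood of $\omega_\ast$ has objective strictly below $L$, so it can contain no other accumulation point, all of which are feasible with value $L$. A compact set of isolated points is finite, so $\Omega$ is finite.

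Finally I would rule out $|\Omega| \geq 2$. Let $3\rho$ be the minimal pairwise distance among the finitely many points of $\Omega$. For all large $k$ we have $\mathrm{dist}(\omega_k, \Omega) < \rho$ (otherwise a subsequence would yield an accumulation point at distance at least $\rho$ from $\Omega$), and also $\|\omega_{k+1} - \omega_k\| < \rho$ by the estimate above. A single step shorter than $\rho$ cannot cross the gap of width at least $\rho$ separating the disjoint radius-$\rho$ balls about distinct points of $\Omega$, so the tail of the sequence is trapped in one such ball, forcing $\Omega$ to be a singleton and contradicting $|\Omega| \geq 2$. Therefore $\{ \omega_k \}$ has a unique accumulation point; being bounded, it converges to it, and that limit is a local maximizer of (\ref{eq:problem}) by Lemma~\ref{lemma:conv_subsequences}.
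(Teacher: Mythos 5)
Your proof is correct, and it takes a genuinely different route from the paper's. Both arguments share the same skeleton---Bolzano--Weierstrass in the compact set ${\mathcal F}$, monotone convergence of $\{c^T\omega_k\}$ (Lemma~\ref{thm:convergence_objective}), Lemma~\ref{lemma:conv_subsequences} to identify limits of subsequences as local maximizers, and the strictness hypothesis---but the trapping mechanisms differ. The paper's is topological: it fixes one subsequence converging to a strict local maximizer $\omega_\ast$, shows the entire tail of $\{\omega_k\}$ is confined to the connected component of $\{\omega \;|\; c^T\omega \geq c^T\omega_{k_\eta}\}\cap{\mathcal F}$ containing $\omega_\ast$ (connectivity being supplied by the segments $[\omega_k,\omega_{k+1}]\subseteq{\mathcal F}_k\subseteq{\mathcal F}$ and monotonicity), and then gets a contradiction by maximizing $c^T\omega$ over the compact set obtained by removing an $\epsilon$-ball around $\omega_\ast$ from that component. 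Yours is metric: the identity $\frac{\gamma}{2}\|\omega_{k+1}-\omega_k\|^2 = \lambda_k + \frac{1}{\mu_+}\,c^T(\omega_{k+1}-\omega_k)$, extracted from the KKT system (\ref{eq:KKT}) and the update rule (\ref{eq:update_rule}), forces vanishing step lengths (this is an ingredient the paper never uses, and it buys a quantitative bound tying step lengths to objective increments); strictness plus the constancy of $c^T\omega$ on the accumulation set then makes that set finite, and steps shorter than the gaps between its points cannot migrate between them. One caveat: your argument applies Lemma~\ref{lemma:conv_subsequences} to \emph{every} accumulation point, yet the lemma requires simplicity of $\lambda_{\min}({\mathcal A}(\omega_\ast))$ and $\nabla\lambda_{\min}({\mathcal A}(\omega_\ast))\neq 0$ at the limit, which the theorem's hypotheses guarantee only at points already known to be local maximizers---so there is a latent circularity for accumulation points not yet known to be maximizers. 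The paper's proof makes exactly the same unstated regularity assumption, but only for the single subsequence limit it selects, so your reliance on it is somewhat heavier; flagging it explicitly (or assuming the regularity at all accumulation points) would make your argument airtight relative to the paper's own standard.
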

\begin{proof}
Let $\{ \omega_{k_j} \}$ be a subsequence as in Lemma \ref{lemma:conv_subsequences} converging to a local
maximizer $\omega_\ast$, which is strict by assumption. Let ${\mathcal L}^\ast_{k}$ denote the connected component of 
the set
\[
		{\mathcal L}_k := \{ \omega \; | \; c^T \omega \geq  c^T \omega_{k} \} \cap {\mathcal F}
\]
containing $\omega_\ast$. Due to the continuity of $\lambda_{\min}({\mathcal A}(\omega))$, and since $\omega_\ast$
is a strict local maximizer, there exists an $\eta$ such that $c^T \omega_\ast > c^T \omega$ for all 
$\omega \in {\mathcal L}^\ast_{k_\eta} \backslash \{ \omega_\ast \}$.

We claim that all $\omega_k$ for $k \geq k_\eta$ belong to the set ${\mathcal L}^\ast_{k_\eta}$. To see this, first recall
that $\{ c^T \omega_k \}$ is monotonically increasing, and each $\omega_k$ is feasible. Thus $\omega_k \in {\mathcal L}_{k_\eta}$
for each $k \geq k_\eta$. Furthermore, $\omega = \omega_k + \alpha (\omega_{k+1} - \omega_k) \in {\mathcal L}_{k_\eta}$ 
for all $\alpha \in [0,1]$ and $k \geq k_\eta$. This is due to $c^T (\omega_{k+1} - \omega_k) \geq 0$ so that 
$c^T \omega \geq c^T \omega_k$, as well as $\omega_k, \omega_{k+1} \in {\mathcal F}_k$ and the convexity of ${\mathcal F}_k$
implying $\omega = \omega_k + \alpha (\omega_{k+1} - \omega_k) \in {\mathcal F}_k \subseteq {\mathcal F}$. Thus
each $\omega_k$ for $k \geq k_\eta$ belongs to the same connected component of ${\mathcal L}_{k_\eta}$.
Finally notice that $\omega_{k_\eta}$, thus all $\omega_k$ for $k \geq k_\eta$, must belong to the 
connected component of ${\mathcal L}_{k_\eta}$ containing $\omega_\ast$. Assuming otherwise yields
that $\| \omega_{k_j} - \omega_\ast \|$ for each $j \geq \eta$ is greater than or equal to the minimum distance 
from the component of ${\mathcal L}_{k_\eta}$ containing all $\omega_{k}$ for $k \geq k_\eta$ to $\omega_\ast$, 
which contradicts $\{ \omega_{k_j} \}$ converging to $\omega_\ast$.

For the sake of contradiction, assume that the sequence $\{ \omega_k \}$ does not converge to $\omega_\ast$.
Then there exists an $\epsilon > 0$ such that
\[
	\forall N \; \exists k \geq N	\;\;\;\;\;	\| \omega_k - \omega_\ast \|	\geq \epsilon.
\]
We can choose this $\epsilon$ as small as we like. On the other hand, since $\{ c^T \omega_k \}$ is convergent
by Lemma \ref{thm:convergence_objective} and indeed it must converge to $c^T \omega_\ast$, for any given positive 
integer $\ell$ there must exist a positive integer $N(\epsilon,\ell)$ such that
\[
	\forall k \geq N(\epsilon,\ell)	\;\;\;\;\;	c^T \omega_k \geq c^T \omega_\ast - \epsilon/\ell.
\]
Thus for each positive integer $\ell$ there exists a $k(\ell) \geq k_\eta$ such that
\begin{equation}\label{eq:nearby_pts_scomp}
	c^T \omega_{k(\ell)} \geq c^T \omega_\ast - \epsilon/\ell	\;\;\;	{\rm and}	\;\;\;	\| \omega_{k(\ell)} - \omega_\ast \|	\geq \epsilon.
\end{equation}
Now consider the maximal value of $c^T\omega$ over all $\omega \in \{ \tilde{\omega} \; | \; \| \tilde{\omega} - \omega_\ast \| \geq \epsilon \} \cap {\mathcal L}^\ast_{k_\eta}$.
This maximal value must be strictly less than $c^T \omega_\ast$, say $c^T \omega_\ast - \beta$ for some $\beta > 0$,
as the maximization is over a compact subset of ${\mathcal L}^\ast_{k_\eta}$. But all $\omega_{k(\ell)}$ belongs to
$\{ \tilde{\omega} \; | \; \| \tilde{\omega} - \omega_\ast \| \geq \epsilon \} \cap {\mathcal L}^\ast_{k_\eta}$.
In particular for any integer $\ell > \epsilon/\beta$ we end up with the contradiction 
$\omega_{k(\ell)} \in \{ \tilde{\omega} \; | \; \| \tilde{\omega} - \omega_\ast \| \geq \epsilon \} \cap {\mathcal L}^\ast_{k_\eta}$
and $c^T \omega_{k(\ell)} > c^T \omega_\ast - \beta$ from the inequalities in (\ref{eq:nearby_pts_scomp}).

\end{proof}

\section{A Fixed-Point View: Rate of Convergence }\label{sec:rate_convergence}
In this section, under the assumption that $\{ \omega_k \}$ itself converges to a local maximizer $\omega_\ast$, we deduce a linear rate of convergence
revealing also the factors affecting the speed of convergence. Throughout the section we use the short-hands $\lambda_\ast, \nabla \lambda_\ast$ 
and $\nabla^2 \lambda_\ast$ for $\lambda_{\min}( {\mathcal A}(\omega_\ast) ), \nabla \lambda_{\min}( {\mathcal A}(\omega_\ast) )$
and $\nabla^2 \lambda_{\min}( {\mathcal A}(\omega_\ast) )$, respectively. As we shall see, the rate of convergence 
is mainly determined by the eigenvalue distribution of the projected Hessian 
\[
	{\mathcal H}_V		:=	V^T \cdot \nabla^2 \lambda_\ast \cdot V 
\]
where $V \in {\mathbb R}^{d\times d-1}$ is an isometry with columns formed by an orthonormal basis for the 
subspace orthogonal to $\nabla \lambda_\ast$. In particular, the convergence 
is faster when the eigenvalues of ${\mathcal H}_V$ are closer to $\gamma$. In the extreme case, when 
${\mathcal H}_V = \gamma I$, the rate of convergence becomes superlinear.

We will put a fixed point theory in use: it follows from (\ref{eq:update_rule}) that the sequence $\{ \omega_k \}$
is a fixed point sequence $\omega_{k+1} = f(\omega_k)$ where
\begin{equation}\label{eq:fixed_point_func}
	f(\omega)
			:=
	\omega
			+
	\frac{1}{\gamma}
	\left[
		\frac{\sqrt{ \| \nabla \lambda_{\min} ( {\mathcal A}(\omega) ) \|^2   -   2\gamma \lambda_{\min}( {\mathcal A}(\omega) )}}{ \| c \|} \cdot c
						-
			\nabla \lambda_{\min}({\mathcal A}(\omega))	
	\right].
\end{equation}
The Jacobian of $f(\omega)$ given by
\[
	J(\omega)
		=
	I
			+
	\frac{1}{\gamma}
	\left[
			\frac{ c \cdot \nabla \lambda_{\min}({\mathcal A}(\omega))^T (\nabla^2 \lambda_{\min}({\mathcal A}(\omega))  -  \gamma I)}
			        { \| c \| \sqrt{ \| \nabla \lambda_{\min}({\mathcal A}(\omega) \| - 2\gamma \lambda_{\min}({\mathcal A}(\omega))}}
			        					-
			\nabla^2 \lambda_{\min}({\mathcal A}(\omega)) 
	\right]
\]
for $\omega$ close to $\omega_\ast$ plays a prominent role in determining the rate of convergence. The following second order 
necessary conditions must hold at the local maximizer $\omega_\ast$.
\begin{theorem}\label{thm:KKT}
Suppose that $\omega_\ast$ is a local maximizer of  (\ref{eq:optimal_Jacobian}) such that $\lambda_\ast$
is simple, and $\nabla \lambda_\ast \neq 0$. Then
	\textbf{(1)} $\lambda_\ast = 0$, 
	\textbf{(2)} $c = \mu \nabla \lambda_\ast$ for some $\mu > 0$, and
	\textbf{(3)} $V^T \nabla^2 \lambda_\ast V \succeq 0$ where $V \in {\mathbb C}^{d\times(d-1)}$ is a matrix 
	whose columns form an orthonormal basis for the subspace orthogonal to $\nabla \lambda_\ast$.
\end{theorem}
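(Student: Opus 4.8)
The plan is to exploit that, since $\lambda_\ast$ is simple, part \textbf{(iv)} of Lemma~\ref{lemma:anal_eig} makes $\lambda_{\min}(\mathcal{A}(\omega))$ twice continuously differentiable in a neighborhood of $\omega_\ast$. Near $\omega_\ast$ the problem (\ref{eq:problem}) is therefore an ordinary smooth nonlinear program with the single inequality constraint $g(\omega) := \lambda_{\min}(\mathcal{A}(\omega)) \leq 0$, and I would derive all three assertions from the first- and second-order optimality conditions, tracking the sign of the multiplier carefully. Throughout I would use $c \neq 0$, which is consistent with assertion \textbf{(2)} together with $\nabla\lambda_\ast \neq 0$.

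For assertion \textbf{(1)} I would argue by contradiction: if $\lambda_\ast < 0$, then by continuity of $\lambda_{\min}(\mathcal{A}(\cdot))$ the point $\omega_\ast$ lies in the interior of $\mathcal{F}$, so a short step in the direction $c$ stays feasible while strictly increasing $c^T\omega$, contradicting local maximality; hence the constraint is active, $\lambda_\ast = 0$. For assertion \textbf{(2)}, now using $\lambda_\ast = 0$, any direction $p$ with $c^T p > 0$ and $\nabla\lambda_\ast^T p < 0$ would produce nearby feasible points of strictly larger objective; the nonexistence of such a $p$ is exactly what Lemma~\ref{lemma:ascent_directions} guarantees cannot be avoided unless $\omega_\ast$ is a local maximizer. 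A Gordan/Farkas-type argument on the system $\{c^T p > 0,\ \nabla\lambda_\ast^T p < 0\}$ then forces $c = \mu\nabla\lambda_\ast$ with $\mu \geq 0$, and $\mu > 0$ follows because $\mu = 0$ would give $c = 0$.

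The substance of the proof is assertion \textbf{(3)}. Writing $T := \{ p : \nabla\lambda_\ast^T p = 0 \} = \mathrm{range}(V)$ for the tangent space of the constraint surface, I would fix an arbitrary $p \in T$ and, using $\nabla\lambda_\ast \neq 0$ together with the $C^2$ regularity above and the implicit function theorem, produce a twice-differentiable curve $\omega(t)$ lying on the boundary $\{ g = 0 \} \subseteq \mathcal{F}$ with $\omega(0) = \omega_\ast$ and $\omega'(0) = p$. Differentiating the identity $g(\omega(t)) \equiv 0$ twice at $t = 0$ yields $\nabla\lambda_\ast^T \omega''(0) = -\,p^T \nabla^2\lambda_\ast\, p$. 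Since each $\omega(t)$ is feasible, $c^T\omega(t)$ has a local maximum at $t = 0$, whose second-order condition $c^T\omega''(0) \leq 0$, combined with $c = \mu\nabla\lambda_\ast$, gives $-\mu\, p^T\nabla^2\lambda_\ast\, p \leq 0$. As $\mu > 0$, this delivers $p^T\nabla^2\lambda_\ast\, p \geq 0$ for every $p \in T$, which is precisely $V^T\nabla^2\lambda_\ast V \succeq 0$.

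I expect the main obstacle to be the rigorous construction of the feasible boundary curve $\omega(t)$ and the reduction to curves lying exactly on $\{ g = 0 \}$: one must invoke the simplicity of $\lambda_\ast$ to secure the $C^2$ regularity required by the implicit function theorem, and confirm that the first-order term $c^T p$ vanishes on $T$ (a consequence of assertion \textbf{(2)}) so that the second-order term is the decisive one. The remaining steps are routine Taylor expansions once this smooth reduction is in place.
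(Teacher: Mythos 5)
Your proposal is correct, but there is no proof in the paper to compare it against: Theorem \ref{thm:KKT} is stated without proof, the author treating it as the classical first- and second-order necessary optimality conditions for a smooth nonlinear program. (Note also that the reference to (\ref{eq:optimal_Jacobian}) in the statement is a typo for (\ref{eq:problem}); you interpreted it correctly.) Your derivation is exactly the textbook argument made self-contained for this setting: simplicity of $\lambda_\ast$ plus Lemma \ref{lemma:anal_eig} part \textbf{(iv)} give $C^2$ regularity of the constraint near $\omega_\ast$, the single active constraint with $\nabla \lambda_\ast \neq 0$ gives the constraint qualification, the Gordan/Farkas alternative yields the multiplier with the right sign, and the boundary-curve construction via the implicit function theorem delivers the projected-Hessian condition. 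The key identities check out, in particular $\nabla\lambda_\ast^T \omega''(0) = -\,p^T \nabla^2 \lambda_\ast\, p$ and the sign bookkeeping $-\mu\, p^T \nabla^2 \lambda_\ast\, p \leq 0$ with $\mu > 0$, which together give assertion \textbf{(3)}. Your explicit flagging of the tacit hypothesis $c \neq 0$ is also right; the theorem is false without it, and the paper assumes it implicitly throughout.

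One wrinkle worth fixing: your citation of Lemma \ref{lemma:ascent_directions} in step \textbf{(2)} points the wrong way. That lemma asserts that non-maximality implies the \emph{existence} of a direction $p$ with $c^T p > 0$ and $\nabla\lambda_\ast^T p < 0$; what you need is the converse — that local maximality rules out such a $p$ — and this is not a logical consequence of the lemma. Fortunately the first half of your own sentence already proves the converse directly (such a $p$ combined with $\lambda_\ast = 0$ and a first-order Taylor expansion produces nearby feasible points of strictly larger objective, contradicting local maximality), so the citation is superfluous rather than damaging; simply delete it and let the direct argument stand.
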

\noindent
It is straightforward to verify that any fixed point of $f(\omega)$ satisfies the first order optimality conditions (parts \textbf{(1)} and \textbf{(2)}
of Theorem \ref{thm:KKT}), or otherwise $\nabla \lambda_{\min}( {\mathcal A}(\omega) )$ vanishes at the fixed point.

At the local maximizer $\omega_\ast$, due to parts \textbf{(1)} and \textbf{(2)} of Theorem \ref{thm:KKT}, we must have 
$\lambda_\ast = 0$ and $c/\| c \|  = \nabla \lambda_\ast / \| \nabla \lambda_\ast \|$.
Thus, simple calculations yield
\begin{equation}\label{eq:optimal_Jacobian}
	J(\omega_\ast)
		= 
		\left[
			I 	-	\frac{ \nabla \lambda_\ast \cdot \nabla \lambda_\ast^T}
						{\| \nabla \lambda_\ast \|^2}
		\right]	
		\left[
		I	-	\frac{1}{\gamma} \nabla^2 \lambda_\ast	
		\right] \\
		=
		( V V^T )	
		\left[
		I	-	\frac{1}{\gamma} \nabla^2 \lambda_\ast	
		\right].
\end{equation}
We could deduce a linear convergence result rather quickly, if $\| J(\omega_\ast) \| < 1$. Unfortunately, this is not true in general; indeed the 
right-hand factor in the expression above for $J(\omega_\ast)$ can have its norm larger than one due to the negative eigenvalues
of $\nabla^2 \lambda_\ast$. On the other hand, the projected version of the Jacobian
\[
	V^T J(\omega_\ast) V	=	I	-	\frac{1}{\gamma} V^T \nabla^2 \lambda_\ast V		=	\frac{1}{\gamma} ( \gamma I - {\mathcal H}_V )
\] 
is a contraction, provided $V^T \nabla^2 \lambda_\ast V \succ 0$
(i.e., the sufficient condition for the optimality of $\omega_\ast$, which is slightly stronger than the necessary 
condition \textbf{(3)} in Theorem \ref{thm:KKT}), due to the following lemma.
\begin{lemma}\label{thm:norm_sdp}
Suppose $S \in {\mathbb R}^{k \times k}$ is a symmetric positive semi-definite matrix such that $\| S \| \leq 1$. 
Then $\| I - S \| \leq 1$. Furthermore, if $S$ is positive definite, then $\| I - S \| < 1$.
\end{lemma}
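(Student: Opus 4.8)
The plan is to reduce everything to the eigenvalues of $S$, exploiting the fact that for a symmetric matrix the $2$-norm coincides with the spectral radius. First I would invoke the spectral theorem to write $S = Q \Lambda Q^T$ with $Q \in {\mathbb R}^{k\times k}$ orthogonal and $\Lambda = \mathrm{diag}(\sigma_1, \dots, \sigma_k)$ real. Positive semi-definiteness gives $\sigma_i \geq 0$ for each $i$, while the hypothesis $\| S \| \leq 1$, combined with the identity $\| S \| = \max_i |\sigma_i| = \max_i \sigma_i$ valid for symmetric $S$, forces $\sigma_i \leq 1$. Hence every eigenvalue of $S$ lies in the interval $[0,1]$.

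Next I would observe that $I - S = Q(I - \Lambda)Q^T$ shares the eigenvectors of $S$ and has eigenvalues $1 - \sigma_i$. Since $I - S$ is again symmetric, the same norm-equals-spectral-radius identity gives $\| I - S \| = \max_i |1 - \sigma_i|$. Because $0 \leq \sigma_i \leq 1$, we have $0 \leq 1 - \sigma_i \leq 1$, so $|1 - \sigma_i| = 1 - \sigma_i \leq 1$ for every $i$, and taking the maximum yields $\| I - S \| \leq 1$, which is the first assertion.

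For the strict statement I would note that if $S$ is positive definite then $\sigma_i > 0$ for every $i$, so $1 - \sigma_i < 1$; combined with $1 - \sigma_i \geq 0$ (which follows from $\sigma_i \leq 1$), each quantity $|1 - \sigma_i| = 1 - \sigma_i$ lies strictly below $1$. Since there are only finitely many eigenvalues, the maximum is attained and therefore remains strictly less than $1$, giving $\| I - S \| < 1$.

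There is no substantial obstacle here; the only points requiring care are the identification $\| M \| = \max_i |\mu_i|$ for symmetric $M$, which is what lets both the hypothesis and the conclusion be phrased entirely in terms of eigenvalues, and, in the definite case, the observation that $1 - \sigma_i$ never falls below $-1$ (automatic from $\sigma_i \leq 1$) so that the absolute value simplifies. I would also make explicit that the strict inequalities at the individual eigenvalues combine into a strict inequality for the maximum precisely because the spectrum is finite.
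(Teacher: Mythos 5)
Your proof is correct and follows essentially the same route as the paper: both diagonalize $S$ via the spectral theorem, observe that the eigenvalues of $I-S$ are $1-\lambda_i$ with each $\lambda_i \in [0,1]$ (respectively $(0,1]$ in the definite case), and conclude the norm bound from there. The only cosmetic difference is that the paper extracts the bound by expanding an arbitrary unit vector in the orthonormal eigenbasis and computing $\|(I-S)v\|$ directly, whereas you invoke the standard identity $\|M\| = \max_i |\mu_i|$ for symmetric $M$ applied to $I-S$; these are the same argument packaged differently.
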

\begin{proof}
Let $v_1, \dots, v_k$ be eigenvectors associated with the eigenvalues $\lambda_1, \dots, \lambda_k$ of $S$ such that
${\mathcal V} = \{ v_1, \dots, v_k \}$ is orthonormal. Note that $\lambda_j \in [0,1]$ for $j = 1,\dots, k$. Expand each 
$v \in {\mathbb R}^k$ of unit norm with respect to ${\mathcal V}$, that is $v =  \alpha_1 v_1 + \alpha_2 v_2 + \dots + \alpha_k v_k$ 
where $\sqrt{\alpha_1^2 + \dots + \alpha_k^2} = 1$. Then
\[
	\| (I - S) v \|		=	\| \alpha_1 (1 - \lambda_1) v_1 + \dots + \alpha_n (1 - \lambda_n) v_n \|
				=	\sqrt{ \alpha_1^2 (1 - \lambda_1)^2  +  \dots  +  \alpha_n^2 (1 - \lambda_n)^2 }
				\leq 1
\]
meaning $\| I - S \| \leq 1$. If $S$ is positive definite, then $\lambda_j \in (0,1]$ for $j = 1,\dots, k$. In this case the argument 
above yields $\| (I - S) v \|	< 1$ and $\| I - S \| < 1$. 
\end{proof}
\begin{theorem}\label{thm:Jacobian_norm}
	Let $\omega_\ast$ be a local maximizer of (\ref{eq:problem}) such that $\lambda_\ast$
	is simple, and $\nabla \lambda_\ast \neq 0$. Then we have $ \frac{1}{\gamma} \| \gamma I - {\mathcal H}_V \| \leq 1$. 
	Additionally, if $V^T \nabla^2 \lambda_\ast V \succ 0$, then $\frac{1}{\gamma} \| \gamma I - {\mathcal H}_V \| < 1$.
\end{theorem}

The projected Jacobian $V^T J(\omega_\ast) V$ assumes the role of $J(\omega_\ast)$ for large $k$, since 
$\omega_k - \omega_\ast$ lies almost on ${\rm Col}(V)$ for such $k$ and has very little component in the direction of 
$u := \nabla \lambda_\ast / \| \nabla \lambda_\ast \|$. This is proven next.
\begin{lemma}\label{lemma:prop_comp}
Suppose that $\{ \omega_{k} \}$ converges to a local maximizer $\omega_\ast$, $\lambda_\ast$ is simple, and
$\nabla \lambda_\ast \neq 0$. Furthermore, suppose 
$\lim_{k\rightarrow \infty} [I - 1/\gamma \nabla^2 \lambda_\ast] [(\omega_k - \omega_\ast)  /\| \omega_k - \omega_\ast \|] \not \in {\rm span}\{ u \}$.
Then $|u^T (\omega_k - \omega_\ast)| / \| V^T (\omega_k - \omega_\ast ) \| \rightarrow 0$ as $k \rightarrow \infty$. 
\end{lemma}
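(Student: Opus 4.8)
The plan is to linearize the fixed-point map $f$ at $\omega_\ast$ and then read off the asymptotics of the two error components from the block structure of the Jacobian recorded in (\ref{eq:optimal_Jacobian}). Set $e_k := \omega_k - \omega_\ast$ and $u := \nabla \lambda_\ast / \| \nabla \lambda_\ast \|$; because $\gamma$ is a strict upper bound the iterates never coincide with $\omega_\ast$, so $e_k \neq 0$ for all $k$. Since $\lambda_\ast$ is simple and $\nabla \lambda_\ast \neq 0$ (so that by Theorem \ref{thm:KKT} we have $\lambda_\ast = 0$ and the radicand $\| \nabla \lambda_\ast \|^2 - 2\gamma \lambda_\ast = \| \nabla \lambda_\ast \|^2$ is strictly positive), part \textbf{(iv)} of Lemma \ref{lemma:anal_eig} guarantees that $f$ is continuously differentiable in a neighborhood of $\omega_\ast$. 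Using $\omega_\ast = f(\omega_\ast)$ and $\omega_{k+1} = f(\omega_k)$, a first-order Taylor expansion gives
\[
	e_{k+1} = J(\omega_\ast)\, e_k + o(\| e_k \|).
\]

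First I would exploit the factorization $J(\omega_\ast) = (VV^T)\,[\,I - \tfrac{1}{\gamma} \nabla^2 \lambda_\ast\,]$ from (\ref{eq:optimal_Jacobian}). As the columns of $V$ are orthogonal to $\nabla \lambda_\ast$, we have $V^T u = 0$, hence $u^T (VV^T) = 0$; left-multiplying the expansion by $u^T$ annihilates the leading term and yields
\[
	u^T e_{k+1} = o(\| e_k \|).
\]
Left-multiplying instead by $V^T$ and using $V^T (VV^T) = V^T$ gives
\[
	V^T e_{k+1} = V^T \left[\, I - \tfrac{1}{\gamma} \nabla^2 \lambda_\ast \,\right] e_k + o(\| e_k \|).
\]

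Then I would normalize by $\| e_k \|$ and pass to the limit. The first relation immediately yields $|u^T e_{k+1}| / \| e_k \| \to 0$. For the second, write $w_k := [\, I - \tfrac{1}{\gamma} \nabla^2 \lambda_\ast \,](e_k / \| e_k \|)$; the hypothesis asserts $w_k \to w_\ast$ with $w_\ast \notin \mathrm{span}\{ u \}$, which is equivalent to $V^T w_\ast \neq 0$. Dividing the second relation by $\| e_k \|$ then gives $\| V^T e_{k+1} \| / \| e_k \| \to \| V^T w_\ast \| =: c_0 > 0$. Combining the two,
\[
	\frac{ |u^T e_{k+1}| }{ \| V^T e_{k+1} \| }
		=
	\frac{ |u^T e_{k+1}| / \| e_k \| }{ \| V^T e_{k+1} \| / \| e_k \| }
		\longrightarrow
	\frac{0}{c_0} = 0,
\]
and since this is a statement as $k \to \infty$, shifting the index by one establishes the claim.

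The only delicate point is the denominator: a priori $\| V^T e_{k+1} \|$ could decay as fast as the numerator, in which case the ratio would be indeterminate. This is precisely what the non-degeneracy hypothesis $w_\ast \notin \mathrm{span}\{ u \}$ rules out, by forcing $c_0 > 0$; it is the crux that makes the projected Jacobian $V^T J(\omega_\ast) V$ — rather than the full, possibly non-contractive $J(\omega_\ast)$ — govern the asymptotic behavior of $e_k$, consistent with the discussion preceding the lemma.
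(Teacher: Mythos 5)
Your proof is correct and follows essentially the same route as the paper's: both linearize the fixed-point iteration $\omega_{k+1}=f(\omega_k)$ at $\omega_\ast$, exploit the factorization $J(\omega_\ast)=(VV^T)\left[I-\tfrac{1}{\gamma}\nabla^2\lambda_\ast\right]$ so that $u^T$ annihilates the leading term, and use the hypothesis $w_\ast\notin{\rm span}\{u\}$ to keep the tangential component bounded away from zero. The only cosmetic differences are that you invoke Fr\'echet differentiability (Taylor expansion with $o(\|e_k\|)$ remainder) where the paper applies the mean value theorem component-wise, and you project directly with $V^T$ where the paper first controls $\|p_{k+1}\|/\|p_k\|$ and then recovers $\|V^Tp_{k+1}\|$ via the Pythagorean identity.
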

\begin{proof}
Letting $p_k = \omega_{k}  -  \omega_{\ast}$, we have
\[
		p_{k+1}			=	\omega_{k+1}		-	\omega_\ast		=	f(\omega_k)	-	f(\omega_\ast)
						=	
							J_k	\cdot p_k
							\;\;\;\; {\rm where}	\;\;
							J_k 
								=
							\left[
							\begin{array}{c}
								\nabla f_1(\omega_\ast + \eta_1 p_k)^T	\\
								\nabla f_2(\omega_\ast + \eta_2 p_k)^T	\\
								\vdots	\\
								\nabla f_d(\omega_\ast + \eta_d p_k)^T
							\end{array}
							\right]
\]
for some $\eta_1, \dots, \eta_d \in (0,1)$ by the mean value theorem. Above $f_j(\omega)$ denotes the $j$th component of $f(\omega)$.
The recurrence above can be rearranged as
\begin{equation}\label{eq:recurrence}
	\begin{split}
		p_{k+1}	& =	J(\omega_\ast) p_k	+	[J_k - J(\omega_\ast)] p_k		\\
				& =	( V V^T )	
		\left[
		I	-	\frac{1}{\gamma} \nabla^2 \lambda_\ast	
		\right] p_k	+	[J_k - J(\omega_\ast)] p_k	.	
	\end{split}
\end{equation}
By dividing both sides above by $\| p_k \|$, taking the norms and then the limit as $k \rightarrow \infty$ yield
\begin{equation}\label{eq:recurrence2}
	\begin{split}
		\lim_{k\rightarrow \infty}
			\frac{\| p_{k+1} \| }{ \| p_k \| }	 & =	
		\lim_{k\rightarrow \infty}
		\left\|
			( V V^T )	
			\left[
				I	-	\frac{1}{\gamma} \nabla^2 \lambda_\ast	
			\right] \frac{p_k}{ \| p_k \| }	+	[J_k - J(\omega_\ast)] \frac{p_k}{ \| p_k \| }	
		\right\|	\\
								& =
		\lim_{k\rightarrow \infty}
		\left\|
			( V V^T )	
			\left[
				I	-	\frac{1}{\gamma} \nabla^2 \lambda_\ast	
			\right] \frac{p_k}{ \| p_k \| }
		\right\|,
	\end{split}
\end{equation}
where we utilized $\lim_{k\rightarrow \infty} J_k = J(\omega_\ast)$. This means that
$
	\lim_{k\rightarrow \infty}
			\| p_{k+1} \| / \| p_k \|  > 0
$
due to the assumption $\lim_{k\rightarrow \infty} (I - 1/\gamma \nabla^2 \lambda_\ast) (p_k/\| p_k \|) \not \in {\rm span}\{ u \} = {\rm Null}(VV^T)$.

Multiplying both sides of (\ref{eq:recurrence}) by $u^T$ from the left, and taking the absolute value lead us to
\[
	| u^T p_{k+1} |		=		| u^T [J_k - J(\omega_\ast)] p_k |,
\]
thus
\[
	\lim_{k\rightarrow \infty}
	\frac{ | u^T p_{k+1} | }{ \| p_{k} \| }
				=
	\left| u^T [J_k - J(\omega_\ast)] \frac{p_k}{\| p_k \|} \right|
				= 0.
\]
On the other hand 
\[
	0 < \lim_{k\rightarrow \infty} \frac{\| p_{k+1} \|}{\| p_k \|}	=	\lim_{k\rightarrow \infty} \frac{\sqrt{  \| V^Tp_{k+1} \|^2 + | u^T p_{k+1}|^2  }}{\| p_k \|}
												= 	\lim_{k\rightarrow \infty} \frac{ \| V^Tp_{k+1} \| }{\| p_k \|}.
\]
Now the result follows from
\[	
	\lim_{k\rightarrow \infty}   \left(  \frac{ | u^T p_{k+1} | }{ \| p_{k} \| }   \right) / \left( \frac{ \| V^T p_{k+1} \| }{ \| p_{k} \| }  \right)
					=
	\lim_{k\rightarrow \infty}   \frac{ | u^T p_{k+1} | }{ \| V^T p_{k+1} \| }.
\]
\end{proof}

\begin{theorem}[Rate of Convergence]\label{thm:rate_convergence}
Suppose that $\{ \omega_{k} \}$ converges to a local maximizer $\omega_\ast$, $\lambda_\ast$ is simple, and
$\nabla \lambda_\ast \neq 0$. Then one of the following two hold:
 \begin{enumerate}
 \item[\bf (1)] If $\;\lim_{k\rightarrow \infty} [I - 1/\gamma \nabla^2 \lambda_\ast] [(\omega_k - \omega_\ast)  /\| \omega_k - \omega_\ast \|] \in {\rm span}\{ u \}$, then
 \begin{equation}\label{eq:rate_conv1}
 		\lim_{k\rightarrow \infty}
		\frac{\|  \omega_{k+1} - \omega_\ast  \|}{\|  \omega_k - \omega_\ast  \|}	=	0;
 \end{equation}
 \item[\bf (2)] Otherwise,
\begin{equation}\label{eq:rate_conv2}
		\frac{1}{\gamma} \sigma_{\min}( \gamma I - {\mathcal H}_V )
		\leq
		\lim_{k\rightarrow \infty}
		\frac{\|  \omega_{k+1} - \omega_\ast  \|}{\|  \omega_k - \omega_\ast  \|} \leq \frac{1}{\gamma} \| \gamma I - {\mathcal H}_V \|.
\end{equation}
\end{enumerate}
\end{theorem}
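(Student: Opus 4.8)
The plan is to read off both cases from the limit identity (\ref{eq:recurrence2}) established in the proof of Lemma \ref{lemma:prop_comp}, namely
\[
	\lim_{k\rightarrow\infty}\frac{\|p_{k+1}\|}{\|p_k\|}
		=
	\lim_{k\rightarrow\infty}
	\left\|
		(VV^T)\left[ I - \frac{1}{\gamma}\nabla^2\lambda_\ast \right]\frac{p_k}{\|p_k\|}
	\right\|,
\]
where $p_k := \omega_k - \omega_\ast$. Writing $\hat{p}_k := p_k/\|p_k\|$ and decomposing it as $\hat{p}_k = V\hat{a}_k + u\hat{b}_k$ with $\hat{a}_k = V^T\hat{p}_k$ and $\hat{b}_k = u^T\hat{p}_k$, everything reduces to understanding the image of $\hat{p}_k$ under the optimal Jacobian $J(\omega_\ast) = (VV^T)[ I - \frac{1}{\gamma}\nabla^2\lambda_\ast]$ from (\ref{eq:optimal_Jacobian}), together with the facts $VV^T u = 0$ (since ${\rm Col}(V)$ is orthogonal to $u$) and $V^T[ I - \frac{1}{\gamma}\nabla^2\lambda_\ast]V = \frac{1}{\gamma}(\gamma I - {\mathcal H}_V)$.

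For case \textbf{(1)} I would argue directly. By hypothesis the vector $[ I - \frac{1}{\gamma}\nabla^2\lambda_\ast]\hat{p}_k$ converges to an element of ${\rm span}\{u\}$. Since the linear map $VV^T$ is continuous and annihilates $u$, the right-hand side of the displayed identity equals $\|(VV^T)(cu)\| = 0$ for the limiting scalar $c$. Hence $\lim_{k\rightarrow\infty}\|p_{k+1}\|/\|p_k\| = 0$, which is (\ref{eq:rate_conv1}).

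For case \textbf{(2)} the engine is Lemma \ref{lemma:prop_comp}, which applies precisely because the limit is not in ${\rm span}\{u\}$ and gives $|u^T p_k|/\|V^T p_k\|\rightarrow 0$; consequently $\hat{b}_k\rightarrow 0$ and $\|\hat{a}_k\|\rightarrow 1$. I would then compute
\[
	(VV^T)\left[ I - \frac{1}{\gamma}\nabla^2\lambda_\ast \right]\hat{p}_k
		=
	V\cdot\frac{1}{\gamma}(\gamma I - {\mathcal H}_V)\hat{a}_k
		-
	\frac{\hat{b}_k}{\gamma}(VV^T)\nabla^2\lambda_\ast u,
\]
where the $u\hat{b}_k$-part contributes only the last $O(\hat{b}_k)$ term because $VV^T u = 0$. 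Taking norms and using that $V$ is an isometry, the first term has norm $\frac{1}{\gamma}\|(\gamma I - {\mathcal H}_V)\hat{a}_k\|$ while the second is $O(\hat{b}_k)\rightarrow 0$. Sandwiching $\|(\gamma I - {\mathcal H}_V)\hat{a}_k\|$ between $\sigma_{\min}(\gamma I - {\mathcal H}_V)\|\hat{a}_k\|$ and $\|\gamma I - {\mathcal H}_V\|\,\|\hat{a}_k\|$ and letting $\|\hat{a}_k\|\rightarrow 1$ then yields the two-sided bound (\ref{eq:rate_conv2}).

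The main obstacle I expect is bookkeeping the lower-order terms rigorously: one must be sure that the component of $\hat{p}_k$ along $u$ genuinely becomes negligible relative to its component in ${\rm Col}(V)$ --- this is exactly what Lemma \ref{lemma:prop_comp} supplies --- and that the residual terms, both the $O(\hat{b}_k)$ piece above and the perturbation $[J_k - J(\omega_\ast)]\hat{p}_k$ buried in (\ref{eq:recurrence2}), vanish in the limit so that they do not contaminate the singular-value bounds. A secondary point worth care is that (\ref{eq:rate_conv2}) is stated for the genuine limit, so one relies on the existence of $\lim_{k\rightarrow\infty}\|p_{k+1}\|/\|p_k\|$ already guaranteed by the identity (\ref{eq:recurrence2}); the bounds then follow because the limiting quantity is trapped between the extreme singular values of $\gamma I - {\mathcal H}_V$.
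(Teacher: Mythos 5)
Your proposal is correct and takes essentially the same route as the paper's own proof: case \textbf{(1)} is read off directly from the identity (\ref{eq:recurrence2}) since $VV^T$ annihilates ${\rm span}\{u\}$, and case \textbf{(2)} is handled exactly as in the paper by splitting $p_k/\|p_k\|$ into its ${\rm Col}(V)$ and ${\rm span}\{u\}$ components, invoking Lemma \ref{lemma:prop_comp} to make the $u$-component (and the $[J_k - J(\omega_\ast)]$ perturbation) vanish in the limit, and sandwiching the surviving term $\frac{1}{\gamma}\|(\gamma I - {\mathcal H}_V)V^T p_k\|/\|p_k\|$ between $\frac{1}{\gamma}\sigma_{\min}(\gamma I - {\mathcal H}_V)$ and $\frac{1}{\gamma}\|\gamma I - {\mathcal H}_V\|$ using that $V$ is an isometry and $\|V^T p_k\|/\|p_k\| \rightarrow 1$.
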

\begin{proof}
If $\lim_{k\rightarrow \infty} [I - 1/\gamma \nabla^2 \lambda_\ast] [(\omega_k - \omega_\ast)  /\| \omega_k - \omega_\ast \|] \in {\rm span}\{ u \}$, then
(\ref{eq:rate_conv1}) is an immediate consequence of (\ref{eq:recurrence2}). 

Thus suppose $\lim_{k\rightarrow \infty} [I - 1/\gamma \nabla^2 \lambda_\ast] [(\omega_k - \omega_\ast)  /\| \omega_k - \omega_\ast \|] \not \in {\rm span}\{ u \}$. We exploit 
the recurrence (\ref{eq:recurrence}), and the identities
\begin{equation}\label{eq:limit_identities}
	|u^T (\omega_k - \omega_\ast)| / \| (\omega_k - \omega_\ast ) \| \rightarrow 0,	\;\;	\| V^T (\omega_k - \omega_\ast) \| / \| (\omega_k - \omega_\ast ) \| \rightarrow 1
	\;\;\;\; {\rm as} \;\; k \rightarrow \infty,
\end{equation}
which are immediate corollaries of Lemma \ref{lemma:prop_comp} as 
\[
	\| (\omega_k - \omega_\ast ) \|	
							= \sqrt{ |u^T (\omega_k - \omega_\ast)|^2	+	\| V^T (\omega_k - \omega_\ast) \|^2}.
\]
Dividing the norms of both sides of (\ref{eq:recurrence}) by $\| p_k \|$ we obtain
\[
	\frac{\| p_{k+1 \|}}{\| p_k \|}
			=
	\left\|		
		VV^T
		\left[
		I	-	\frac{1}{\gamma} \nabla^2 \lambda_\ast	
		\right] VV^T \frac{p_k}{\| p_k \|}	+
		VV^T
		\left[
		I	-	\frac{1}{\gamma} \nabla^2 \lambda_\ast	
		\right] uu^T \frac{p_k}{\| p_k \|}		+	
		[J_k - J(\omega_\ast)] \frac{p_k}{\| p_k \|}
	\right\|
\]
where $p_k = \omega_k - \omega_\ast$. Finally, taking the limits of both sides, and utilizing the
identities (\ref{eq:limit_identities}) yield
\[
	\begin{split}
	\lim_{k\rightarrow \infty}	\frac{\| p_{k+1 \|}}{\| p_k \|}
						& =
	\lim_{k\rightarrow \infty}	\left\|		
							VV^T
							\left[
								I	-	\frac{1}{\gamma} \nabla^2 \lambda_\ast	
							\right] VV^T \frac{p_k}{\| p_k \|}
						\right\| \\
						& =
	\lim_{k\rightarrow \infty}	\left\|
							\frac{1}{\gamma} ( \gamma I - {\mathcal H}_V )
							\frac{V^T p_k}{\| p_k \|}
						\right\| \\
						& \leq
	\lim_{k\rightarrow \infty}	\left\|
							\frac{1}{\gamma} ( \gamma I - {\mathcal H}_V )
						\right\|
							\frac{\| V^T p_k \|}{\| p_k \|}
								=	
						\frac{1}{\gamma} 
						\left\|
							  \gamma I - {\mathcal H}_V 
						\right\|.
	\end{split}
\]
The lower bound can be deduced similarly from the inequality
$
						\left\|
							\frac{1}{\gamma} ( \gamma I - {\mathcal H}_V )
							\frac{V^T p_k}{\| p_k \|}
						\right\|
						\geq
						\frac{1}{\gamma} \sigma_{\min}( \gamma I - {\mathcal H}_V ) \frac{\| V^T p_k \|}{\| p_k \|}.
$
\end{proof}

\section{Estimation of an Upper Bound on Second Derivatives}\label{sec:sec_der}
The practicality of the algorithm presented and analyzed relies on the availability of an upper
bound $\gamma$ satisfying (\ref{eq:sd_bound}). The next result is helpful in determining
such a $\gamma$ analytically. An analogous result was proven in \cite[Theorem 6.1]{Mengi2013} for the Hessian
of a weighted sum of the $j$ largest eigenvalues. We include a proof for the sake of completeness. 
\begin{theorem}\label{thm:sec_der_ubound}
Let ${\mathcal A}(\omega) : {\mathbb R}^d \rightarrow {\mathbb C}^{n\times n}$ be a Hermitian
and analytic matrix-valued function. Then
\[
	\lambda_{\max}  \left[ \nabla^2 \lambda_{\min} ({\mathcal A}(\omega))   \right]	
					\leq
		\lambda_{\max}
		\left( 		\nabla^2 {\mathcal A} (\omega)		\right)
\]
for all $\omega \in {\mathbb R}^d$ such that $\lambda_{\min} ({\mathcal A}(\omega))$ is simple, where
\[
	\nabla^2 {\mathcal A}(\omega)
			:=
	\left[
			\begin{array}{cccc}
					\frac{\partial {\mathcal A}^2(\omega)}{\partial \omega_1^2}	&  \frac{\partial {\mathcal A}^2(\omega)}{\partial \omega_1\partial \omega_2}  & \dots  & 	\frac{\partial {\mathcal A}^2(\omega)}{\partial \omega_1 \partial \omega_d}	\\
					\frac{\partial {\mathcal A}^2(\omega)}{\partial \omega_2 \partial \omega_1}  &  \frac{\partial {\mathcal A}^2(\omega)}{\partial \omega_2^2}	& \dots	& \frac{\partial {\mathcal A}^2(\omega)}{\partial \omega_2 \partial \omega_d}	\\
					\vdots	&  \vdots  &	 &  \vdots	\\
					\frac{\partial {\mathcal A}^2(\omega)}{\partial \omega_d \partial \omega_1} & 	\frac{\partial {\mathcal A}^2(\omega)}{\partial \omega_d \partial \omega_2} & \dots	&	\frac{\partial {\mathcal A}^2(\omega)}{\partial \omega_d^2}
			\end{array}
	\right]
\]
\end{theorem}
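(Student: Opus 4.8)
The plan is to start from the explicit second-derivative formula in part \textbf{(iv)} of Lemma \ref{lemma:anal_eig} and split the Hessian into two pieces. Writing $v := v_n(\omega)$ for the unit eigenvector associated with the simple eigenvalue $\lambda_{\min}(\mathcal{A}(\omega))$, I would set $N$ to be the $d \times d$ matrix with entries $N_{k\ell} = v^\ast \frac{\partial^2 \mathcal{A}}{\partial \omega_k \partial \omega_\ell} v$ and let $M$ denote the remaining double-sum term, so that $\nabla^2 \lambda_{\min}(\mathcal{A}(\omega)) = N + M$. Note that $N$ is real symmetric: each block $\frac{\partial^2 \mathcal{A}}{\partial \omega_k \partial \omega_\ell}$ is Hermitian, so $v^\ast(\cdot)v$ is real, and equality of mixed partials makes $N$ symmetric.

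The crucial step is to show $M \preceq 0$. For a real vector $x = (x_1, \dots, x_d)^T$, I would introduce the scalars $a_m := v^\ast \left( \sum_{k=1}^d x_k \frac{\partial \mathcal{A}}{\partial \omega_k} \right) v_m$. Since each $\frac{\partial \mathcal{A}}{\partial \omega_k}$ is Hermitian, the complex conjugate $\overline{a_m}$ equals $\sum_\ell x_\ell\, v_m^\ast \frac{\partial \mathcal{A}}{\partial \omega_\ell} v$, so a direct expansion of $x^T M x$ collapses to $2 \sum_{m=1}^{n-1} |a_m|^2 / (\lambda_{\min} - \lambda_m)$, where $\lambda_m$ is the eigenvalue associated with $v_m$. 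Because $\lambda_{\min}$ is \emph{simple}, each denominator $\lambda_{\min} - \lambda_m < 0$ strictly, and hence $x^T M x \leq 0$ for every $x$. This yields $\nabla^2 \lambda_{\min}(\mathcal{A}(\omega)) \preceq N$ and therefore $\lambda_{\max}[\nabla^2 \lambda_{\min}(\mathcal{A}(\omega))] \leq \lambda_{\max}(N)$.

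It then remains to bound $\lambda_{\max}(N)$ by $\lambda_{\max}(\nabla^2 \mathcal{A}(\omega))$. First I would observe that $\nabla^2 \mathcal{A}(\omega)$ is Hermitian as a $dn \times dn$ matrix, since its blocks are Hermitian and equality of mixed partials makes its block pattern symmetric. For a unit vector $x \in \mathbb{R}^d$, I would then test $\nabla^2 \mathcal{A}(\omega)$ against the block vector $w := x \otimes v$ whose $k$th block is $x_k v$. A short computation gives $w^\ast (\nabla^2 \mathcal{A}(\omega))\, w = \sum_{k,\ell} x_k x_\ell\, v^\ast \frac{\partial^2 \mathcal{A}}{\partial \omega_k \partial \omega_\ell} v = x^T N x$, while $\| w \| = \| x \| = 1$ because $v$ is a unit vector. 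The Rayleigh-quotient characterization then gives $x^T N x \leq \lambda_{\max}(\nabla^2 \mathcal{A}(\omega))$ for every unit $x$, hence $\lambda_{\max}(N) \leq \lambda_{\max}(\nabla^2 \mathcal{A}(\omega))$, and chaining the inequalities completes the proof.

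I expect the main obstacle to be the algebraic reduction showing $M \preceq 0$: one must carefully use the Hermitian symmetry of the first derivatives so that the real part of the double sum factors as the nonnegative combination $2\sum_m |a_m|^2/(\lambda_{\min}-\lambda_m)$, and then invoke simplicity of $\lambda_{\min}$ to force every denominator to be strictly negative. The block-vector argument bounding $\lambda_{\max}(N)$ is comparatively routine once the correct test vector $w = x \otimes v$ is identified.
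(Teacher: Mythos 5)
Your proof is correct and takes essentially the same route as the paper's: both start from the Hessian formula in part \textbf{(iv)} of Lemma \ref{lemma:anal_eig}, show the eigenvector-coupling double sum is negative semidefinite (your factorization $x^T M x = 2\sum_m |a_m|^2/(\lambda_{\min}-\lambda_m) \leq 0$ is just the aggregated form of the paper's per-term observation that each $\Re(H_{n,m})$ is positive semidefinite with strictly negative coefficients), and bound the remaining block term by testing $\nabla^2 {\mathcal A}(\omega)$ against the Kronecker-structured unit vector $x \otimes v_n(\omega)$.
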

\begin{proof}
By Theorem \ref{lemma:anal_eig} part \textbf{(iv)} we have 
\[
	\lambda_{\max}  \left[ \nabla^2 \lambda_{\min} ({\mathcal A}(\omega))   \right]	=	
			H_n(\omega) 	+ 	2 \sum_{m=1}^{n-1}  \frac{1}{ \lambda_{\min}({\mathcal A}(\omega)) - \lambda_m(\omega) }  \Re \left( H_{n,m}(\omega) \right)
\]
where the entries of $H_n(\omega)$ and $H_{n,m}(\omega)$ at position $(k,\ell)$ are given by
\[
	v_n^{\ast}(\omega)	\frac{\partial^2 {\mathcal A}(\omega)}{\partial \omega_k \; \partial \omega_l} v_n(\omega)
		\;\; 	{\rm and}	\;\;
	\left(
		v_n(\omega)^{\ast} \frac{\partial {\mathcal A}(\omega)}{\partial \omega_k} v_m(\omega)
	\right) 
	\left(
		v_m(\omega)^{\ast} \frac{\partial {\mathcal A}(\omega)}{\partial \omega_\ell} v_n(\omega)
	\right),
\]
respectively. It is straightforward to verify that $H_{n,m}(\omega)$ is positive semidefinite, since
for each $u \in {\mathbb C}^d$ we have 
\[
	u^\ast H_{n,m}(\omega) u		=		\left| \sum_{\ell = 1}^d h^{(n,m)}_\ell u_\ell  \right|^2	\geq 0
	\;\;\;\;\;\; {\rm where} \;\;
	h^{(n,m)}_\ell	= v_m(\omega)^{\ast} \frac{\partial {\mathcal A}(\omega)}{\partial \omega_\ell} v_n(\omega).
\]
This implies that $\Re \left( H_{n,m}(\omega) \right)$ is also positive semi-definite
due to $u^T \Re \left( H_{n,m}(\omega) \right) u = u^T H_{n,m}(\omega) u \geq 0$ for each $u \in {\mathbb R}^d$.
Thus we deduce
\[
	\lambda_{\max}  \left[ \nabla^2 \lambda_{\min} ({\mathcal A}(\omega))   \right]
				\leq
	\lambda_{\max}(H_n(\omega)) \leq \lambda_{\max}(\nabla^2 {\mathcal A}(\omega)).
\]
Denoting the Kronecker product with $\otimes$, the last inequality above follows from 
$H_n(\omega) = \left[ I_d \otimes v_n^\ast(\omega) \right] \nabla^2 {\mathcal A}(\omega) \left[ I_d \otimes v_n(\omega) \right]$
and the observation that there exists a unit vector $v \in {\mathbb C}^d$ satisfying
\[
	\lambda_{\max}(H_n(\omega)) = v^\ast H_n(\omega) v	=	\left[ v^\ast \otimes v_n^\ast(\omega) \right] \nabla^2 {\mathcal A}(\omega) \left[ v \otimes v_n(\omega) \right]
							\leq \lambda_{\max}(\nabla^2 {\mathcal A}(\omega)).
\]
\end{proof}

\section{Case Study: Pseudospectral Functions}\label{sec:pseudo}
\textbf{Pseudospectral Abscissa:}
The $\epsilon$-pseudospectrum of a matrix $A \in {\mathbb C}^{n\times n}$ is the subset of the complex plane 
consisting of the eigenvalues of all matrices within an $\epsilon$-neighborhood of $A$, formally defined by
\begin{displaymath}
	\Lambda_{\epsilon}(A)	:=	\bigcup_{\| \Delta \| \leq \epsilon} \Lambda(A + \Delta),
\end{displaymath}
with the singular value characterization
\begin{equation}\label{eq:pss_svd}
	\Lambda_{\epsilon}(A) 	=	\{ z \in {\mathbb C} \; | \; \sigma_n(A - zI) \leq \epsilon \},
\end{equation}
when it is defined in terms of the matrix 2-norm \cite{Trefethen2005}. Here $\sigma_n(\cdot)$ denotes
the smallest singular value. The rightmost point in this set
$\alpha_{\epsilon}(A)$ is called the $\epsilon$-pseudospectral abscissa, and is an indicator of
the transient behavior of the dynamical system $x'(t) = Ax(t)$. Globally and locally convergent algorithms for 
$\alpha_{\epsilon}(A)$ have been suggested in \cite{Mengi2005} and \cite{Guglielmi2011}, respectively.
The $\epsilon$-pseudospectral abscissa can be cast as the following optimization problem
\[
	{\rm maximize}_{\omega \in {\mathbb R}^2}	\;\; \omega_1		\;\;\;\;	{\rm subject} \;\; {\rm to} \;\;	
				\lambda_{\min}({\mathcal A}(\omega)) \leq 0
\]
where 
\begin{center}
	${\mathcal A}(\omega) = \left[ A - (\omega_1 + i\omega_2)I \right]^\ast	\left[ A - (\omega_1 + i\omega_2)I \right] - \epsilon^2 I$,
\end{center}
that fits into the framework (\ref{eq:problem}). It follows from the expressions in part \textbf{(iv)} of
Lemma \ref{lemma:anal_eig} that
\[
	\nabla \lambda_{\min}({\mathcal A}(\omega))
				=
	\left( \;
		v_n(\omega)^\ast (-A -A^\ast + 2\omega_1 I) v_n(\omega), \;\;
		v_n(\omega)^\ast (-iA + iA^\ast + 2\omega_2 I) v_n(\omega)
		\;
	\right).
\]
Furthermore, the matrix $\nabla^2 {\mathcal A}(\omega)$ in Theorem \ref{thm:sec_der_ubound} is given by
\[
	\nabla^2 {\mathcal A}(\omega)
				=
	\left[
			\begin{array}{cc}
				\frac{\partial {\mathcal A}^2(\omega)}{\partial \omega_1^2}	&  \frac{\partial {\mathcal A}^2(\omega)}{\partial \omega_1\partial \omega_2} 	\\
				\frac{\partial {\mathcal A}^2(\omega)}{\partial \omega_2 \partial \omega_1}  &  \frac{\partial {\mathcal A}^2(\omega)}{\partial \omega_2^2}
			\end{array}
	\right]
				=
				2 I.
\]
Thus, we deduce $\lambda_{\max}\left[ \nabla^2 \lambda_{\min}({\mathcal A}(\omega)) \right] \leq \gamma := 2$ 
for all $\omega$ such that $\lambda_{\min}({\mathcal A}(\omega))$ is simple by Theorem \ref{thm:sec_der_ubound}.

We run the algorithm on a $10\times 10$ matrix with random entries selected from a normal distribution
with zero mean and variance equal to one to compute $\alpha_{\epsilon}(A)$, as well as to compute the real 
part of the left-most point in $\Lambda_{\epsilon}(A)$ for $\epsilon = 1$. The algorithm is initiated with $\omega_0$ 
equal to the right-most eigenvalue. The progress of the algorithm is illustrated in Figure \ref{fig:psa_psr}
with red and blue asterisks. The algorithm requires 39 iterations to compute both the right-most and the left-most 
points accurate up to 12 decimal digits. In Figure \ref{fig:tangential_progress}, on the left the later iterates of the 
algorithm for the right-most point are shown. Remarkably, at the later iterations the directions $(\omega_k - \omega_\ast)$ 
become more or less orthogonal to $\nabla \lambda_{\min}({\mathcal A}(\omega_\ast))$ (equivalently
tangent to the boundary of $\Lambda_{\epsilon}(A)$), confirming the validity of Lemma \ref{lemma:prop_comp}.

\begin{figure}
		\begin{center}
		\includegraphics[width=.70\textwidth,]{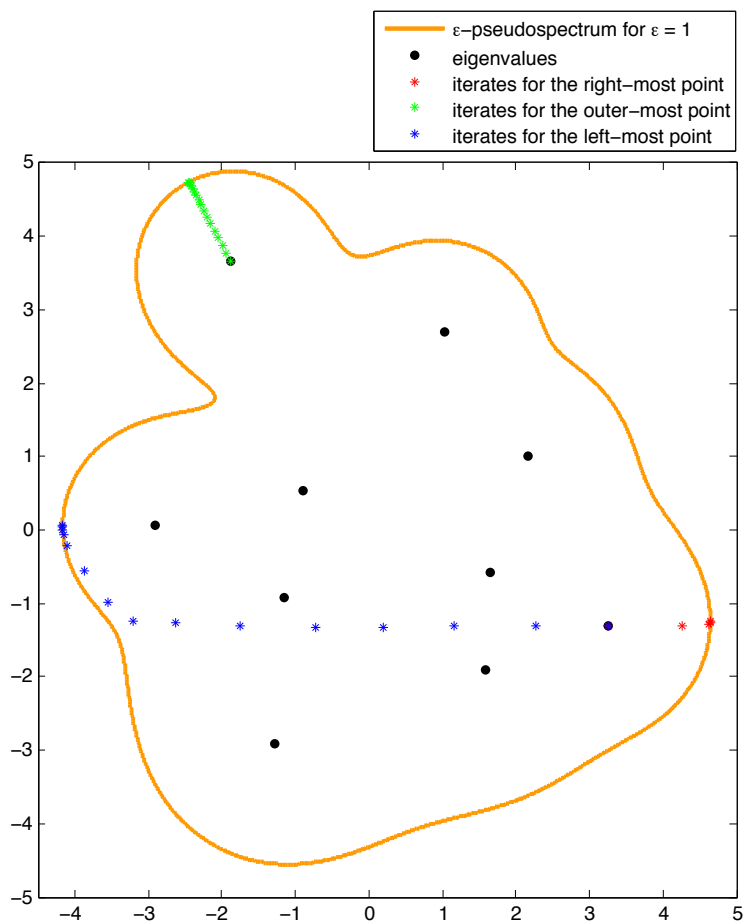} 
		\end{center}
	    \caption{ The progress of the algorithm to compute the right-most, left-most and outer-most points in
	    $\Lambda_{\epsilon}(A)$ is illustrated for a random $10\times 10$ matrix and $\epsilon = 1$. The orange
	    curve represents the boundary of $\Lambda_{\epsilon}(A)$, while the red, blue and green asterisks mark
	    the iterates of the algorithm to compute the right-most, left-most and outer-most points in $\Lambda_{\epsilon}(A)$.
	    The real part of the right-most and the modulus of the outer-most points correspond to $\alpha_{\epsilon}(A)$ 
	    and $\rho_{\epsilon}(A)$, respectively.  }\label{fig:psa_psr}
\end{figure}

\begin{figure}
   	\begin{tabular}{ll}
		\includegraphics[width=.31\textwidth,]{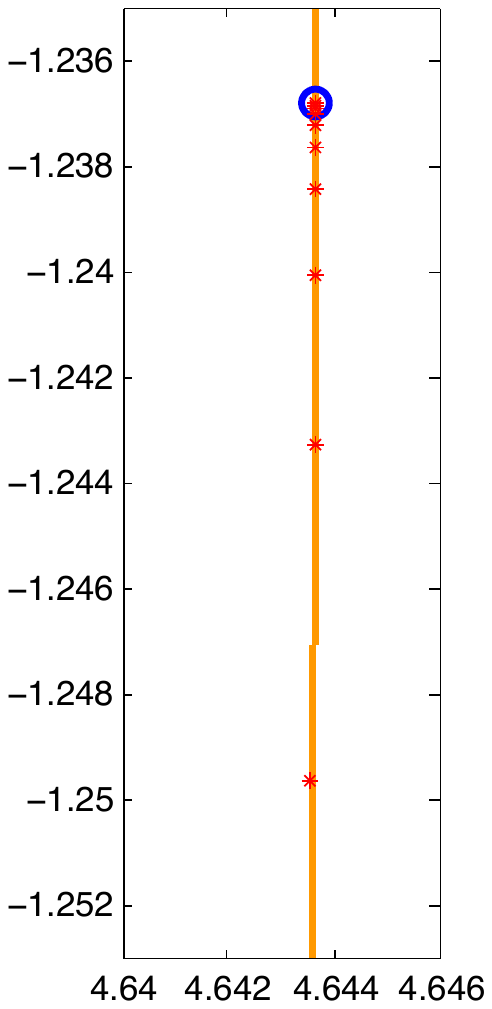} &
		\includegraphics[width=.60\textwidth,]{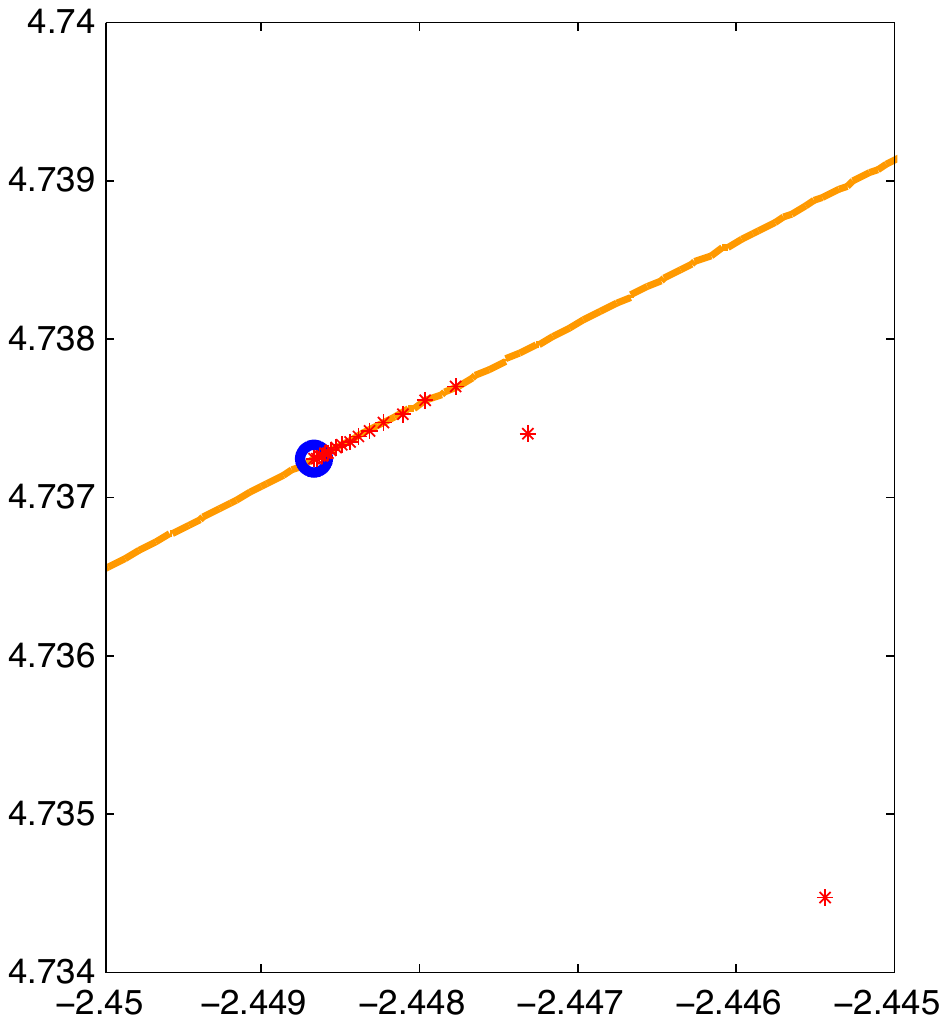} \\
	\end{tabular}
	    \caption{ 
	    Left - A closer look at the iterates for the right-most point; the blue circle marks the right-most point computed 
	    by the highly accurate algorithm in \cite{Burke2003}. Right - Later iterates for the outer-most point; the blue
	    circle marks the outer-most point computed by the algorithm in \cite{Mengi2005}
	    }\label{fig:tangential_progress}
\end{figure}

As for the rate of convergence, the errors $\| \omega_k - \omega_\ast \|$ involved in computing the right-most point 
at later iterations are listed in Table \ref{tab:error}. The table indicates a linear convergence.  Furthermore,
the projected Hessian reduces to the scalar ${\mathcal H}_V =   \frac{ \partial^2 \lambda_{\min}({\mathcal A}(\omega_\ast)) }{\omega_2^2}$, 
thus Theorem \ref{thm:rate_convergence} implies
\[
	\lim_{k\rightarrow \infty}	\frac{\| \omega_{k+1} - \omega_{\ast} \|}{\| \omega_k - \omega_\ast \|}
							=
				\left|	1		-		\frac{1}{\gamma} {\mathcal H}_V 		\right|	=	\left|		1	-	\frac{1}{2} \frac{ \partial^2 \lambda_{\min}({\mathcal A}(\omega_\ast)) }{\partial \omega_2^2}	\right| \approx 0.506.
\]
This ratio of decay in the error is also confirmed in practice by Table \ref{tab:error}.

\begin{table}
\begin{center}
\begin{tabular}{|c||cccccc|}
\hline
$k$							&	28					&	29					&	30					&	31					&	32					& 	33				\\
\hline
$\| \omega_k - \omega_\ast \|$		&	$2.056\cdot 10^{-9}$ 	&	$1.041\cdot 10^{-9}$	&	$5.27\cdot 10^{-10}$	&	$2.67\cdot 10^{-10}$	&	$1.35\cdot 10^{-10}$	&	$6.9\cdot 10^{-11}$	\\
\hline
\end{tabular}
\caption{ The errors $\| \omega_k - \omega_\ast \|$ for various $k$ to compute the right-most point in $\Lambda_{\epsilon}(A)$,
rounded to 12 decimal digits, are listed for the random matrix example and $\epsilon = 1$. } \label{tab:error}
\end{center}
\end{table}

\vskip 2ex

\noindent
\textbf{Pseudospectral Radius:} The modulus of the outermost point $\rho_{\epsilon}(A)$ in the $\epsilon$-pseudospectrum
$\Lambda_{\epsilon}(A)$ is called the $\epsilon$-pseudospectral abscissa \cite{Trefethen2005}. This quantity is associated
with the transient behavior of the discrete dynamical system $x_{k+1} = A x_k$. When $\Lambda_{\epsilon}(A)$ is defined
in terms of the spectral norm and using the singular value characterization (\ref{eq:pss_svd}) we deduce
\[
	\rho_{\epsilon}(A) :=		\;\;\;\;\;
	{\rm maximize}_{\omega \in {\mathbb R}^2}	\;\; \omega_1		\;\;\;\;	{\rm subject} \;\; {\rm to} \;\;	
				\lambda_{\min}({\mathcal A}(\omega)) \leq 0
\]
where 
\begin{center}
	${\mathcal A}(\omega) = \left( A - \omega_1 e^{i\omega_2} I \right)^\ast	\left( A -  \omega_1 e^{i\omega_2} I \right) - \epsilon^2 I$.
\end{center}
Now the expressions for the first derivatives take the form
\[
	\nabla \lambda_{\min}({\mathcal A}(\omega))
				=
	\left( \;
		v_n(\omega)^\ast (-2\Re(e^{-i\omega_2} A) + 2\omega_1 I) v_n(\omega), \;\;
		v_n(\omega)^\ast (-2\Im(\omega_1 e^{-i \omega_2} A) ) v_n(\omega)
		\;
	\right),
\]
whereas
\[
	\nabla^2 {\mathcal A}(\omega)
				=
	\left[
			\begin{array}{cc}
				2I					&	-2\Im(e^{-i\omega_2}A)	\\
				-2\Im(e^{-i\omega_2}A)	&	2\Re(\omega_1 e^{-i\omega_2} A) \\
			\end{array}
	\right].
\]
Since $\rho_{\epsilon}(A) \leq \| A \| + \epsilon$, for all feasible $\omega$ we have 
$\omega_1 \leq \| A \| + \epsilon$. Thus, Gersgorin's theorem \cite[Theorem 6.1.1]{Horn1990} applied to
$\nabla^2 {\mathcal A}(\omega)$, combined with Theorem \ref{thm:sec_der_ubound} 
yields 
\[
	\lambda_{\max}  \left[	\nabla^2 \lambda_{\min} ({\mathcal A}(\omega))	  \right]
				\leq
	\gamma := \max\left( 2 + 2\| A \|,  2\epsilon \| A \| + 2 \| A \|^2 + 2 \| A \| \right)
\]
for all feasible $\omega$ such that $\lambda_{\min} \left( {\mathcal A}(\omega) \right)$ is simple.

We apply the algorithm to compute $\rho_{\epsilon}(A)$ starting with $\omega_0$ equal to the
eigenvalue with the largest modulus. The iterates of the algorithm are shown in Figure \ref{fig:psa_psr}
with the green asterisks for the $10\times 10$ random matrix of the previous part concerning 
the computation of $\alpha_{\epsilon}(A)$, and $\epsilon = 1$. The later iterations,
as illustrated on the right in Figure \ref{fig:tangential_progress}, again become
tangential to the boundary of $\Lambda_{\epsilon}(A)$, which is in harmony with Lemma \ref{lemma:prop_comp}.

For the rate of convergence, by Theorem \ref{thm:rate_convergence}, we obtain
\[
	\lim_{k\rightarrow \infty}	\frac{\| \omega_{k+1} - \omega_{\ast} \|}{\| \omega_k - \omega_\ast \|}
							=
				\left|	1		-		\frac{1}{\gamma} {\mathcal H}_V 		\right|	=	\left|		1	-	\frac{1}{\gamma} \frac{ \partial^2 \lambda_{\min}({\mathcal A}(\omega_\ast)) }{\partial \omega_2^2}	\right| \approx 0.791.
\]
This is confirmed by Table \ref{tab:error2} below, which indicates a linear convergence with the ratio of two
consecutive errors roughly equal to 0.791.

\begin{table}
\begin{center}
\begin{tabular}{|c||cccccc|}
\hline
$k$							&	41					&	42					&	43					&	44					&	45					& 	46				\\
\hline
$\| \omega_k - \omega_\ast \|$		&	$1.105\times 10^{-7}$ 	&	$8.742\cdot 10^{-8}$	&	$6.918\cdot 10^{-8}$	&	$5.474\cdot 10^{-8}$	&	$4.332\cdot 10^{-8}$	&	$3.428\cdot 10^{-8}$	\\
\hline
\end{tabular}
\caption{ The errors $\| \omega_k - \omega_\ast \|$ for various $k$ to compute the outer-most point in $\Lambda_{\epsilon}(A)$
are listed for the random matrix example and $\epsilon = 1$. } \label{tab:error2}
\end{center}
\end{table}

\section{Conclusion}
There are quite a few applications that require optimization of linear functions subject to
a minimum eigenvalue constraint, or optimization problems of similar spirit, such as the 
calculation of the pseudospectral functions, shape optimization problems in structural
design, and robust stability problems in control theory. We explored the support based 
global optimization ideas for such problems. The use of quadratic support functions
benefitting from derivatives yield a simple linearly convergent algorithm robust against
the non-smooth nature of the eigenvalue functions. We establish the convergence
assuming the simplicity of the eigenvalue at the optimal point, which we believe is
not essential. In any case, the algorithm is immune to non-smoothness at points close
to optimal points. The rate of the convergence of the algorithm is analyzed in detail
leading us to a fine understanding of the factor affecting the rate of convergence, 
basically the eigenvalue distribution of a projected Hessian matrix.

The algorithm suggested may be applicable in other occasions when the constraint
involves an eigenvalue function other than the smallest one. We should, however, note
practical difficulties in this more general setting, for instance a global lower bound on the 
second derivatives may not be easy to deduce.

The algorithm suggested for constrained eigenvalue optimization is locally convergent, 
while the counter-part for unconstrained optimization in an earlier 
work \cite{Mengi2013} is globally convergent. It is worth pursuing how the algorithm 
can be modified in order to achieve global convergence.

\vskip 2ex

\noindent
\textbf{Acknowledgements} \\
The author is grateful to Emre Alper Y{\i}ld{\i}r{\i}m for fruitful discussions.

\bibliography{constrained_eigopt}
\end{document}